\def\t{\otimes}
\def \Im{\mathop{\sf Im}\nolimits}
\newcommand{\de}{\vdash}
\newcommand{\iz}{\dashv}
\newcommand{\cero}{\bar{0}}
\newcommand{\scero}{_{\bar{0}}}
\newcommand{\uno}{\bar{1}}
\newcommand{\suno}{_{\bar{1}}}
\newcommand{\salpha}{_{\bar{\alpha}}}
\newcommand{\sbeta}{_{\bar{\beta}}}
\newcommand{\menosuno}[2]{(-1)^{|#1| |#2|}}
\newcommand{\grad}[1]{|#1|}
\newcommand{\ik}{\grad{i} + \grad{k}}
\newcommand{\zz}{\mathbb{Z}_2}
\newcommand{\dbra}[3]{\big[#1, [#2, #3] \big]}
\newcommand{\ibra}[3]{\big[[#1, #2], #3 \big]}
\renewcommand{\aa}{\grad{a}}
\newcommand{\bb}{\grad{b}}
\newcommand{\cc}{\grad{c}}
\newcommand{\ii}{\grad{i}}
\newcommand{\jj}{\grad{j}}
\newcommand{\kk}{\grad{k}}
\renewcommand{\ll}{\grad{l}}
\renewcommand{\ss}{\grad{s}}
\newcommand{\slmn}{\mathfrak{sl}(m, n, D)}
\newcommand{\sldu}{\mathfrak{sl}(2, 1, D)}
\newcommand{\sltu}{\mathfrak{sl}(3, 1, D)}
\newcommand{\sldd}{\mathfrak{sl}(2, 2, D)}
\newcommand{\slcc}{\mathfrak{sl}(4, 0, D)}
\newcommand{\sltc}{\mathfrak{sl}(3, 0, D)}
\newcommand{\stmn}{\mathfrak{stl}(m, n, D)}
\newcommand{\stdu}{\mathfrak{stl}(2, 1, D)}
\newcommand{\sttu}{\mathfrak{stl}(3, 1, D)}
\newcommand{\stdd}{\mathfrak{stl}(2, 2, D)}
\newcommand{\stcc}{\mathfrak{stl}(4, 0, D)}
\newcommand{\sttc}{\mathfrak{stl}(3, 0, D)}
\newcommand{\fij}{F_{ij}}
\newcommand{\fji}{F_{ji}}
\newcommand{\fik}{F_{ik}}
\newcommand{\fkj}{F_{kj}}
\newcommand{\fkl}{F_{kl}}
\newcommand{\fki}{F_{ki}}
\newcommand{\fjk}{F_{jk}}
\newcommand{\flj}{F_{lj}}
\newcommand{\flk}{F_{lk}}
\newcommand{\fil}{F_{il}}
\newcommand{\fis}{F_{is}}
\newcommand{\fsk}{F_{sk}}
\newcommand{\fsj}{F_{sj}}
\newcommand{\fks}{F_{ks}}
\newcommand{\fsl}{F_{sl}}
\newcommand{\eij}{E_{ij}}
\newcommand{\ekj}{E_{kj}}
\newcommand{\ekl}{E_{kl}}
\newcommand{\eil}{E_{il}}
\DeclareMathOperator{\Ker}{\sf Ker}
\DeclareMathOperator{\Ima}{\sf Im}
\DeclareMathOperator{\Str}{Str}
\DeclareMathOperator{\HH}{HH}
\DeclareMathOperator{\HHS}{HHS}
\DeclareMathOperator{\HL}{HL}
\DeclareMathOperator{\Ho}{H}
\newcommand{\uce}{\mathfrak{u}}
\DeclareMathOperator{\II}{\mathcal{I}}
\DeclareMathOperator{\WW}{\mathcal{W}}
\DeclareMathOperator{\Z}{Z}
\newtheorem{Th}{Theorem}[section]
\newtheorem{Pro}[Th]{Proposition}
\newtheorem{Le}[Th]{Lemma}
\theoremstyle{definition}
\newtheorem{De}[Th]{Definition}
\newtheorem{Ex}[Th]{Example}
\theoremstyle{remark}
\newtheorem{Rem}[Th]{Remark}
\begin{document}

\title[Universal central extensions of superdialgebras of matrices]{Universal central extensions of superdialgebras of matrices}
\author{X. Garc\'ia--Mart\'inez}
\address{[X. Garc\'ia--Mart\'inez] Department of Algebra, University of Santiago de Compostela, 15782, Spain.}
\email{xabier.garcia@usc.es}
\author{M. Ladra}
\address{[M. Ladra] Department of Algebra, University of Santiago de Compostela, 15782, Spain.}
\email{manuel.ladra@usc.es}

\thanks{The authors were supported by Ministerio de Econom\'ia y
Competitividad (Spain), grant MTM2013-43687-P (European
FEDER support included)
and by Xunta de Galicia,
grant GRC2013-045 (European FEDER support included).
The first author was also supported
by FPU scholarship, Ministerio de Educación, Cultura y Deporte  (Spain).}

\begin{abstract}
We complete the problem of finding the universal central extension
in the category of Leibniz superalgebras of $\slmn$
when $m+n \geq 3$ and $D$ is a superdialgebra,
solving in particular the problem when $D$ is
an associative algebra, superalgebra or dialgebra.
To accomplish this task we use a different method than
the standard studied in the literature.
We introduce and use the non-abelian tensor square of Leibniz superalgebras
and its relations with the universal central extension.

\end{abstract}
\subjclass[2010]{17B60, 17B55, 17B05}
\keywords{Leibniz (super)algebras, (Super)dialgebras, Universal central extensions}

\maketitle

\section{Introduction}

Leibniz algebras, the non-antisymmetric analogue of Lie algebras,
were first defined by Bloh \cite{Bloh} and later
recovered by Loday in \cite{Lod3} when he handled periodicity
phenomena in algebraic $K$-theory. Many authors have studied this structure
and it has some interesting applications in Geometry and
Physics (\cite{KiWe}, \cite{Lodd}, \cite{FeLoOn}).
On the other hand, the theory of superalgebras arises directly from
supersymmetry, a part of the theory of elemental particles, in order to
have a better understanding of the geometrical structure of spacetime and
to complete the substantial meaningful task of the unification of quantum theory and
general relativity (\cite{Var}). The study of Lie or Leibniz superalgebras
has been a very active field in the recent years since the classification of
simple complex finite-dimensional Lie superalgebras by Kac in \cite{Kac}.

The study of central extensions is a very important topic in mathematics.
There is a direct connection between central extensions and (co)homology,
and they also have relations with Physics (\cite{TuWi}).
In particular, universal central extensions
have been studied in many different structures as
groups \cite{Mil}, Lie algebras \cite{Gar}, \cite{Van}
or Lie superalgebras \cite{Neh}.
A very interesting tool in the study of universal central extensions
is the non-abelian tensor product introduced in \cite{BrLo} and extended
to Lie algebras in \cite{Ell1} and to Lie superalgebras in \cite{GaKhLa}.

The theory related with the universal central extension of the special linear algebra $\mathfrak{sl}(n, A)$
has been very active due its relation with cyclic homology and its relevance in algebraic $K$-theory.
The first approach was in the category of Lie algebras by Kassel and Loday in \cite{KaLo} where they described it
when $n \geq 5$ and $A$ is an associative algebra, and in \cite{GaSh} it was obtained for $n \geq 3$.
For the Lie superalgebra $\mathfrak{sl}(n, A)$  and $A$ an associative superalgebra was given  in \cite{ChGu}.
For the special linear superalgebra $\mathfrak{sl}(m, n, A)$ it was worked out for $A$ an associative algebra in \cite{MiPi} and \cite{SCG};
and for $A$ an associative superalgebra in  \cite{ChSu} and \cite{GaLa}.
In the category of Leibniz algebras, the universal central extension of $\mathfrak{sl}(n, A)$ (seen as a Leibniz algebra), where $A$ is an associative algebra,
was found in \cite{LoPi} when $n \geq 5$ and in \cite{JSS} when $n \geq 3$.
For the Leibniz superalgebra $\mathfrak{sl}(m, n, A)$, when $m+n \geq 5$ and $A$ is an associative algebra it was calculated in \cite{LiHu2}.
In \cite{Liu} it was found for the Leibniz algebra $\mathfrak{sl}(m, D)$ and for the Leibniz superalgebra $\mathfrak{sl}(m, n, D)$,
where $m \geq 5$ and $m + n \geq 5$, respectively, and $D$ is an associative dialgebra.

The aim of this paper is to complete the task, finding the universal central extension of $\mathfrak{sl}(m, n, D)$
where $D$ is a superdialgebra and $m+n \geq 3$. Since associative algebras, associative superalgebras and dialgebras
are all examples of associative superdialgebras, we will solve all cases at once.
Moreover, we obtain a result contradicting a specific point of a theorem given in \cite{Liu}.
The most interesting part of this paper is that the method used is not the same as in
all the papers cited above. Due its relation with central extensions, we introduce and
use the non abelian tensor square of Leibniz superalgebras providing another point of view to this topic.

\section{Preliminaries}

In what follows we fix a unital commutative ring $R$.

\subsection{Dialgebras}
We recall from \cite{Lod2} the definitions and basic examples of (super)dialgebras.

\begin{De}
An \emph{associative dialgebra} (\emph{dialgebra} for short) is an $R$-module equipped with two $R$-linear maps
\begin{align*}
\de \colon &D \t_R D \to D, \\
\iz \colon &D \t_R D \to D,
\end{align*}
where $\de$ and $\iz$ are associative and satisfy the following conditions:

\[
\begin{cases*}
a \iz (b \iz c) = a \iz (b \de c), \\
(a \de b) \iz c = a \de (b \iz c), \\
(a \iz b) \de c = (a \de b) \de c, \\
\end{cases*}
\]
for all $a, b, c \in D$.
\end{De}

A \emph{bar-unit} in $D$ is an element $e \in D$ such that for all $x \in D$,
\[
a \iz e = a = e \de a.
\]
Note that a bar-unit may not be unique. A \emph{unital dialgebra} is a dialgebra with a chosen bar-unit, that will be denoted by $1$.
An \emph{ideal} $I \subset D$ is an $R$-submodule such that if $x$ or $y$ belong to $I$ then $x \iz y \in D$ and $x \de y \in D$.

An \emph{associative superdialgebra} (\emph{superdialgebra} for short) is a dialgebra equipped with a $\zz$-graded structure compatible with the two operations, i.e. $D\salpha \de D\sbeta \subseteq D_{\bar{\alpha} + \bar{\beta}}$ and $D\salpha \iz D\sbeta \subseteq D_{\bar{\alpha} + \bar{\beta}}$, for $\bar{\alpha}, \bar{\beta} \in \zz$. The concepts of bar-unit, unital and ideal are analogous in superdialgebras. Note that the bar-unit is always even.

\begin{Ex}
An associative (super)algebra defines a (super)dialgebra structure in a canonical way, where $a \iz b = ab = a \de b$. If it is unital, then the superdialgebra is unital.
\end{Ex}

\begin{Ex}
Let $(A, d)$ a differential associative (super)algebra, i.e., $d(ab) = d(a)b + a d(b)$ and $d^2 = 0$. We define the two operations by
\begin{align*}
x \iz y &= xd(y) \\
x \de y &= d(x)y.
\end{align*}
It is immediate to check that with these operations $(A, d)$ is a (super)dialgebra.
\end{Ex}

\begin{Ex}
Let $A$ an associative (super)algebra, $M$ an $A$-(super)bimodule and $f \colon M \to A$ an $A$-(super)bimodule map. Then we can define a (super)dialgebra structure with operations
\begin{align*}
m \iz m' &= mf(m'), \\
m \de m' &= f(m)m'.
\end{align*}
\end{Ex}

\begin{Ex}
Let $D$ and $D'$ be two superdialgebras. Then the tensor product $D \t_R D'$ is a superdialgebra where
\begin{align*}
(a \t a') \iz (b \t b') &= \menosuno{a'}{b}(a \iz b) \t (a' \iz b'), \\
(a \t a') \de (b \t b') &= \menosuno{a'}{b}(a \de b) \t (a' \de b').
\end{align*}
\end{Ex}

\begin{Ex}
A particular case of the previous example is $\mathcal{M}(n, D) = \mathcal{M}(n, R) \t_R D$, the $R$-supermodule of $(n \times n)$-matrices. The operations are given by
\begin{equation*}
(a \iz b)_{ij} = \sum_k a_{ik} \iz b_{kj} \qquad \text{and} \qquad (a \de b)_{ij} = \sum_k a_{ik} \de b_{kj}.
\end{equation*}
\end{Ex}

\subsection{Leibniz superalgebras}
\begin{De}
A \emph{Leibniz superalgebra} $L$ is an $R$-supermodule with an $R$-linear even map
\[
[-,-] \colon L \t_R L \to L,
\]
satisfying the \emph{Leibniz identity}
\[
\dbra{x}{y}{z} = \ibra{x}{y}{z} - \menosuno{y}{z} \ibra{x}{z}{y},
\]
for all $x, y, z \in L$.
\end{De}

Note that a Leibniz superalgebra where the identity $[x, y] = - \menosuno{x}{y}[y, x]$ also holds, is a Lie superalgebra.

\begin{Ex}
A Lie superalgebra is in particular a Leibniz superalgebra.
\end{Ex}

\begin{Ex}
Let $D$ be a superdialgebra. Then $D$ with the bracket
\[
[a, b] = a \iz b - \menosuno{a}{b} b \de a,
\]
is a Leibniz superalgebra. If the two operations $\iz$ and $\de$ are equal, i.e., $D$ is also an associative superalgebra, this bracket also induces a Lie superalgebra structure.
\end{Ex}

\begin{De}
The \emph{centre} of a Leibniz superalgebra $L$, denoted by $\Z(L)$, is the ideal formed by the elements $z \in L$ such that $[z, x] = [x, z] = 0$ for all $x \in L$. The \emph{commutator} of $L$, denoted by $[L, L]$, is the ideal generated by the elements $[x, y]$ where $x,y\in L$. A Leibniz superalgebra is called \emph{perfect} if $L = [L, L]$.
\end{De}

\begin{De}
A \emph{central extension} of a Leibniz superalgebra $L$ is a surjective homomorphism $\phi \colon M \to L$ such that $\Ker \phi \subseteq \Z(M)$. We say that a central extension $\uce \colon U \to L$ is \emph{universal} if for any central extension $\phi \colon M \to L$ there is a unique homomorphism $f \colon U \to M$ such that $\uce = \phi \circ f$.
\end{De}

The theory of central extensions of Leibniz superalgebras is studied in \cite{LiHu3}. We obtain the following straightforward results.

\begin{Pro}\label{P:central}
Let $\phi \colon E \to M$ and $\psi \colon M \to L$ be two central extensions of Leibniz superalgebras. Then $\phi$ is universal if and only if $\psi \circ \phi$ is universal.
\end{Pro}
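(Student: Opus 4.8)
The plan is to reduce the whole statement to an intrinsic recognition criterion and then exploit the fact that $\phi$ and $\psi\circ\phi$ share the same total space $E$. Concretely, I would invoke the standard characterisation from the theory of central extensions of Leibniz superalgebras (\cite{LiHu3}): a central extension $\pi\colon A\to B$ is universal if and only if $A$ is perfect and every central extension of $A$ splits. Both conditions are intrinsic to $A$ and make no reference to the base $B$. Hence, once I know that $\psi\circ\phi$ is genuinely a central extension, the equivalence becomes immediate, since $\phi\colon E\to M$ and $\psi\circ\phi\colon E\to L$ have the \emph{same} top space $E$: either $E$ is perfect and admits no nontrivial central extension, in which case both $\phi$ and $\psi\circ\phi$ are universal, or it fails one of these properties, in which case neither is.

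The one point that is not purely formal is to check that $\psi\circ\phi$ really is a central extension. In the direction where $\psi\circ\phi$ is assumed universal this is part of the hypothesis, so nothing is needed. In the direction where $\phi$ is assumed universal, I first recall that the total space of a universal central extension is perfect, so $E=[E,E]$. Then I would take $x\in\Ker(\psi\circ\phi)$, so that $\phi(x)\in\Ker\psi\subseteq\Z(M)$, and deduce that for every $e\in E$ both $[x,e]$ and $[e,x]$ lie in $\Ker\phi\subseteq\Z(E)$. Writing an arbitrary element of $E=[E,E]$ as a sum of brackets $[e_1,e_2]$ and expanding $[x,[e_1,e_2]]$ and $[[e_1,e_2],x]$ with the super Leibniz identity, every resulting term carries an inner factor $[x,e_i]$ or $[e_i,x]$ which is central and therefore vanishes; this forces $x\in\Z(E)$, that is, $\Ker(\psi\circ\phi)\subseteq\Z(E)$, as required.

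For the forward implication one can also argue directly, without the recognition criterion: given any central extension $g\colon N\to L$, form the pullback $M\times_L N$ of $\psi$ and $g$, whose projection $p\colon M\times_L N\to M$ is again a central extension (its kernel is $\{0\}\times\Ker g$, and $\Ker g\subseteq\Z(N)$). The universal property of $\phi$ then yields a unique morphism $E\to M\times_L N$ over $M$, and composing with the other projection $M\times_L N\to N$ gives a lift $E\to N$ over $L$. Uniqueness of this lift follows because any two lifts differ by a map of $E$ into $\Ker g\subseteq\Z(N)$, which must vanish once $E$ is perfect; this perfectness-driven rigidity of lifts is the routine engine that recurs throughout the argument.

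I expect the genuine obstacle to lie in the recognition criterion itself rather than in the bookkeeping above. Its hard direction requires producing a splitting of an \emph{arbitrary} central extension of $E$, and here one meets the fact that the composite of two central extensions need not be central unless the top space is perfect, so the naive attempt to feed such an extension into the universal property of $\psi\circ\phi$ does not quite typecheck. I would therefore lean on \cite{LiHu3} for the ``perfect and centrally closed'' characterisation and present the centrality computation of the composite together with the perfectness-based uniqueness of lifts as the self-contained pieces that make the equivalence fall out.
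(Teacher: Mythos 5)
The paper offers no proof of this proposition at all: it is listed among the ``straightforward results'' obtained from the theory of central extensions in \cite{LiHu3}, so there is nothing to match your argument against line by line. Your route is correct and is the standard one. The reduction to the intrinsic criterion (``a central extension is universal iff its total space is perfect and every central extension of that total space splits'') does make the equivalence immediate once $\psi\circ\phi$ is known to be a central extension, and your verification of that centrality is right: for $x\in\Ker(\psi\circ\phi)$ one gets $[x,e],[e,x]\in\Ker\phi\subseteq\Z(E)$, and writing $w=\sum[e_1,e_2]$ the Leibniz identity gives $[x,[e_1,e_2]]=[[x,e_1],e_2]-\pm[[x,e_2],e_1]$ and $[[e_1,e_2],x]=[e_1,[e_2,x]]+\pm[[e_1,x],e_2]$, all of whose terms vanish. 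Your pullback argument for the forward direction and the perfectness-driven uniqueness of lifts are likewise sound. The only place where you lean on the reference rather than argue is the hard direction of the recognition criterion; for completeness note that the standard fix there is to replace an arbitrary central extension $\tau\colon C\to E$ by its restriction to $[C,C]$, which is perfect (since $C=[C,C]+\Ker\tau$ with $\Ker\tau$ central) and still surjects onto $E$, so that the composite with the universal extension becomes central and the universal property can be applied. With that remark either spelled out or delegated to \cite{LiHu3}, your proof is complete and consistent with what the paper intends.
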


\begin{Pro}\label{P:constr}
Let $M$ be a Leibniz superalgebra and $L$ an $R$-supermodule.
An $R$-supermodule homomorphism $\varphi \colon M \to L$ such that $\Ker \varphi \subseteq \Z(M)$ defines a Leibniz superalgebra structure in $L$ where the bracket is
\[
[x, y] = \varphi([\varphi^{-1}(x), \varphi^{-1}(y)]).
\]
for $x, y \in L$.
\end{Pro}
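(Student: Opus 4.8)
The plan is to verify that the proposed formula is well defined and then that it satisfies all the axioms of a Leibniz superalgebra, transporting everything from $M$ through $\varphi$. Throughout I regard $\varphi$ as surjective (as it must be for $\varphi^{-1}(x)$ to be nonempty for every $x \in L$, which is the setting intended by the formula); I write $\tilde x \in M$ for any element with $\varphi(\tilde x) = x$, and call it a lift of $x$.

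The first and only delicate point is well-definedness: the bracket must not depend on the chosen lifts. Given two lifts $\tilde x, \tilde x'$ of $x$ and two lifts $\tilde y, \tilde y'$ of $y$, the differences $\tilde x - \tilde x'$ and $\tilde y - \tilde y'$ lie in $\Ker \varphi \subseteq \Z(M)$. First I would write
\[
[\tilde x, \tilde y] - [\tilde x', \tilde y'] = [\tilde x - \tilde x', \tilde y] + [\tilde x', \tilde y - \tilde y'],
\]
and then invoke that a central element bracketed on either side with anything vanishes, so both summands are zero. Hence $[\tilde x, \tilde y] = [\tilde x', \tilde y']$ already in $M$, and in particular $\varphi([\tilde x, \tilde y])$ is independent of the lifts. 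This is the step that genuinely uses the hypothesis $\Ker \varphi \subseteq \Z(M)$, and it is the heart of the statement; everything else is formal.

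Once well-definedness is secured, the remaining properties follow by lifting and pushing down. For $R$-bilinearity I would note that if $\tilde x_1, \tilde x_2$ lift $x_1, x_2$ then $r\tilde x_1 + \tilde x_2$ lifts $r x_1 + x_2$ by $R$-linearity of $\varphi$; expanding $\varphi([r\tilde x_1 + \tilde x_2, \tilde y])$ via bilinearity of the bracket on $M$ and $R$-linearity of $\varphi$ yields $r[x_1, y] + [x_2, y]$, and symmetrically in the second slot. For the parity, since $\varphi$ is even and surjective, every homogeneous $x \in L\salpha$ admits a homogeneous lift $\tilde x \in M\salpha$ (take the $\bar\alpha$-component of any lift, the other component mapping to $0$); then $[\tilde x, \tilde y] \in M_{\bar\alpha + \bar\beta}$ forces $[x, y] = \varphi([\tilde x, \tilde y]) \in L_{\bar\alpha + \bar\beta}$, so the bracket is even.

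Finally, for the Leibniz identity I would choose homogeneous lifts $\tilde x, \tilde y, \tilde z$ of $x, y, z$ and observe that $[\tilde y, \tilde z]$ lifts $[y, z]$, $[\tilde x, \tilde y]$ lifts $[x, y]$, and $[\tilde x, \tilde z]$ lifts $[x, z]$. Applying $\varphi$ to the Leibniz identity
\[
\dbra{\tilde x}{\tilde y}{\tilde z} = \ibra{\tilde x}{\tilde y}{\tilde z} - \menosuno{\tilde y}{\tilde z}\ibra{\tilde x}{\tilde z}{\tilde y}
\]
valid in $M$, and reading each term back in $L$ through these lifts, produces exactly
\[
\dbra{x}{y}{z} = \ibra{x}{y}{z} - \menosuno{y}{z}\ibra{x}{z}{y},
\]
using $\grad{\tilde y} = \grad{y}$ and $\grad{\tilde z} = \grad{z}$ for homogeneous lifts. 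I do not anticipate any obstacle beyond the well-definedness step; the rest is a routine transport of structure and a direct super-analogue of the classical construction for Leibniz algebras.
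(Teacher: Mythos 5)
Your proposal is correct and complete: the paper states this proposition without proof (labelling it a ``straightforward result''), and your argument---well-definedness via $\Ker\varphi \subseteq \Z(M)$, then transport of bilinearity, parity, and the Leibniz identity through homogeneous lifts---is exactly the verification the authors intend. Your explicit remark that $\varphi$ must be read as surjective for $\varphi^{-1}(x)$ to be nonempty is a sensible clarification of an implicit hypothesis, and the reduction of well-definedness to $[\tilde x,\tilde y]-[\tilde x',\tilde y']=[\tilde x-\tilde x',\tilde y]+[\tilde x',\tilde y-\tilde y']$ with both terms killed by centrality is the key (and only nontrivial) step.
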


Now we introduce the homology of Leibniz superalgebras with trivial coefficients adapting it from the non-graded version \cite{LoPi}.

\begin{De}
Let $L$ be a Leibniz superalgebra and $\delta_{n} \colon L^{\t n} \to L^{\t n-1}$ the $R$-linear map given by
\begin{multline*}
\delta_n(x_1 \t \cdots \t x_n) = \\
\sum_{i<j} (-1)^{n-j+\grad{x_{j}}(\grad{x_{i+1}} + \cdots + \grad{x_{j-1}})} x_1 \t \cdots \t x_{i-1} \t [x_i, x_j] \t x_{i+1} \t \cdots \t \hat{x}_j \t \cdots \t x_n.
\end{multline*}
We define the \emph{homology of Leibniz superalgebras} with trivial coefficients as the homology of the chain complex formed by $\delta_n$, i.e.
\[
\HL_n(L) = \dfrac{\Ker \delta_n}{\Ima \delta_{n+1}}
\]
Note that $\delta_3(x \t y \t z) = -[x,y] \t z + x \t [y, z] + \menosuno{y}{z} [x, z] \t y$.
\end{De}

In \cite{Gne} it is defined a non-abelian tensor product of Leibniz algebras and in \cite{KuPi} is introduced a variation. They both coincide in the case of perfect Leibniz algebras (i.e. $[L, L] = L$) so for simplicity, we will generalize to Leibniz superalgebras the version of \cite{KuPi}.

\begin{De}
Let $L$ be a perfect Leibniz superalgebra. The \emph{non-abelian tensor product} of $L$ is
\[
L \t L = \dfrac{L \t_R L}{\Ima \delta_3},
\]
where $\delta_3$ is the map defined on the chain complex of Leibniz homology and the bracket is $[x \t y, x' \t y'] = [x, y] \t [x', y']$. Therefore, we have a short exact sequence.
\[
\xymatrix{
0 \ar[r] & \HL_2(L) \ar[r] & L \t L \ar[r]^-{\delta_2} & L \ar[r] & 0.
}
\]
\end{De}

\begin{Th}
Let $L$ be a perfect Leibniz superalgebra. Then $\delta_2 \colon L \t L \to L$ is the universal central extension of $L$ and its kernel is $\HL_2(L)$.
\end{Th}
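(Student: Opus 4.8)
The plan is to verify directly that $\delta_2$ is a central extension, then establish its universal property by constructing a canonical lift of an arbitrary central extension and proving that lift is unique. The identification $\Ker \delta_2 = \HL_2(L)$ is precisely the short exact sequence recorded in the definition of $L \t L$, so the content is that the extension is \emph{central} and \emph{universal}.

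First I would check the three ingredients of a central extension. Surjectivity is immediate from perfectness: since $\delta_2(x \t y) = [x,y]$ and $L = [L,L]$, the image of $\delta_2$ is all of $L$. That $\delta_2$ is a homomorphism follows from the definition of the bracket on $L \t L$, as $\delta_2([x \t y, x' \t y']) = \delta_2([x,y] \t [x',y']) = [[x,y],[x',y']] = [\delta_2(x \t y), \delta_2(x' \t y')]$. For centrality of the kernel I would take $z = \sum_i x_i \t y_i$ with $\sum_i [x_i, y_i] = 0$ and compute, for any generator $x' \t y'$, that $[z, x' \t y'] = (\sum_i [x_i, y_i]) \t [x', y'] = 0$ and likewise $[x' \t y', z] = [x', y'] \t (\sum_i [x_i, y_i]) = 0$, using only $R$-bilinearity of the tensor and of the bracket; hence $\Ker \delta_2 \subseteq \Z(L \t L)$.

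Next I would show that $L \t L$ is perfect, which is what forces uniqueness of lifts. Writing any $x, y \in L$ as $x = \sum_i [a_i, b_i]$ and $y = \sum_j [c_j, d_j]$ (possible since $L$ is perfect), bilinearity gives $x \t y = \sum_{i,j} [a_i, b_i] \t [c_j, d_j] = \sum_{i,j} [a_i \t b_i, c_j \t d_j]$, exhibiting every generator of $L \t L$ as a sum of brackets. For universality, given a central extension $\phi \colon M \to L$ I would fix lifts $\tilde{x} \in M$ of each $x \in L$ and set $f(x \t y) = [\tilde{x}, \tilde{y}]$. The verifications are: independence of the chosen lifts, since altering $\tilde{x}$ by an element of $\Ker \phi \subseteq \Z(M)$ leaves $[\tilde{x}, \tilde{y}]$ unchanged; and vanishing of $f$ on $\Ima \delta_3$, which is exactly the Leibniz identity in $M$ applied to $\tilde{x}, \tilde{y}, \tilde{z}$, using that $[\tilde{x}, \tilde{y}]$ is itself a valid lift of $[x,y]$. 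The resulting $f$ is then a homomorphism with $\phi \circ f = \delta_2$ by construction, and uniqueness follows from perfectness: two lifts differ by a $\Ker\phi$-valued map, which vanishes on all brackets and hence on all of $L \t L$.

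The step I expect to require the most care is the vanishing on $\Ima \delta_3$: the signs of the super Leibniz identity must be matched term by term against the explicit formula $\delta_3(x \t y \t z) = -[x,y] \t z + x \t [y, z] + \menosuno{y}{z} [x, z] \t y$, and this is the crux that links the abstract lift to the homological definition of $L \t L$. Everything else reduces to bilinearity and the centrality of $\Ker \phi$.
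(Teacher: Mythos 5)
Your proposal is correct and follows essentially the same route as the paper: centrality of $\Ker\delta_2$ from $\sum_i[x_i,y_i]=0$, the lift $x\t y\mapsto[\tilde x,\tilde y]$ to an arbitrary central extension, and uniqueness via perfectness of $L\t L$. You supply somewhat more detail than the paper does (in particular the explicit check that the lift kills $\Ima\delta_3$ via the Leibniz identity in $M$, which the paper leaves implicit), but the argument is the same.
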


\begin{proof}
Let $\sum_i x_i \t y_i$ be in the kernel of $\delta_2$. Then $\sum_i[x_i, y_i] = 0$, so $[\sum_i x_i \t y_i, x' \t y'] = \sum_i[x_i, y_i] \t [x', y'] = 0$. Therefore, $\delta_2$ is a central extension.
Let $\xymatrix{0 \ar[r] & K \ar[r]^{\iota} & M \ar[r]^{\phi} & L \ar[r] & 0}$ be a central extension. We define a homomorphism $\uce \colon L \t L \to M$, $x \t y \mapsto [\bar{x}, \bar{y}]$, where $\bar{x}$ and $\bar{y}$ are preimages by $\phi$ of $x$ and $y$ respectively. This homomorphism is well defined since $\Ker \phi \subseteq \Z(M)$. If $\uce, \uce'$ are two homomorphisms such that $\phi \circ \uce = \phi \circ \uce'$, then $\uce - \uce' = \iota \circ \eta$ where $\eta \colon L \t L \to K$ and $\eta([L, L]) = 0$. Since $L$ is perfect, $L \t L$ is also perfect and $\uce$ is unique.
\end{proof}

\subsection{Matrix Leibniz superalgebras}
Let $\{1, \dots, m\} \cup \{m+1, \dots, m+n\}$ be a graded set and $D = D\scero \oplus D\suno$ unital superdialgebra. We consider the set $\mathcal{M}(m, n, D)$ of $(m+n)\times (m+n)$-matrices. Let $\eij(a)$ be the matrix with $a \in D$ in the position $(i, j)$ and zeros elsewhere.
We define a grading in $\mathcal{M}(m, n, D)$ where the homogeneous elements are $\eij(a)$ with $a$ homogeneous and the grading is given by $\eij(a) = \ii + \jj + \aa$. Now we define the \emph{general Leibniz superalgebra} $\mathfrak{gl}(m, n, D)$ which has $\mathcal{M}(m, n, D)$ with the previous grading as underlying set and the Leibniz bracket is given by $[x, y] = x \iz y - \menosuno{x}{y} y \de x$.
If $m + n \geq 2$, we define the \emph{special linear Leibniz superalgebra} $\slmn = [\mathfrak{gl}(m, n, D), \mathfrak{gl}(m, n, D)]$. It is easy to see that $\slmn$ is generated by $\eij(a)$ with $a \in D\scero \cup D\suno$ and $1 \leq i \neq j \leq m+n$ and the bracket is given by
\[
[\eij(a), \ekl(b)] = \delta_{jk} \eil(a \iz b) - \menosuno{\eij(a)}{\ekl(b)} \delta_{il} \ekj(b \de a).
\]
Following \cite{Liu}, if $m+n\geq 3$ then $\slmn$ is perfect. We define the \emph{supertrace} as the $R$-bilinear homomorphism $\Str_1 \colon \mathfrak{gl}(m, n, D) \to D$ with \[
\Str_1(x) = \sum_{i=1}^{m+n} (-1)^{\ii (\ii + \grad{x_{ii}})}x_{ii}.
\]
Note that $\slmn = \{x \in \mathfrak{gl}(m, n, D) : \Str_1(x) \in [D, D]\}$.

\begin{De}
Let $D$ be a superdialgebra and $m$ and $n$ non-negative integers such that $m + n \geq 3$. We define the \emph{Steinberg Leibniz superalgebra} denoted by $\stmn$ as the Leibniz superalgebra generated by the elements $\fij(a)$ with $a \in D\scero \cup D\suno$, $1 \leq i \neq j \leq m+n$, where the grading is given by $\grad{\fij(a)} = \ii + \jj +\aa$, subject to the relations
\begin{align*}
{}& a \mapsto \fij(a) \text{ is } R\text{-linear,} \\
{}& [\fij(a), \fkl(b)] = \fik(a \iz b) \text{,} & &\text{if } i \neq l \text{ and } j = k,\\
{}& [\fij(a), \fkl(b)] = -(-1)^{\grad{\fij(a)}\grad{\fki(b)}} \fkj(b \de a) \text{,} & &\text{if } i = l \text{ and } j \neq k,\\
{}& [\fij(a), \fkl(b)] = 0 \text{,} & &\text{if } i \neq l \text{ and } j \neq k.
\end{align*}

We recall from \cite{LiHu} that $\stmn$ is perfect and the canonical Leibniz superalgebra homomorphism $\phi \colon \stmn \to \slmn$, $\fij(a) \mapsto \eij(a)$ is a central extension.
\end{De}

\section{Universal central extension of $\slmn$}

In this section we are going to show that $\stmn$ is the universal central extension of $\slmn$ when $m+n \geq 5$. We are going to use a slightly different method than usual found in the literature. The strategy is to prove that the non-abelian tensor product $\stmn \t \stmn$ is isomorphic to $\stmn$ itself. Then Proposition \ref{P:central} implies that $\stmn$ is the universal central extension of $\slmn$.

\begin{Th}\label{T:ucemn}
There is an isomorphism $\stmn \t \stmn \cong \stmn$ for $m+n \geq 5$.
\end{Th}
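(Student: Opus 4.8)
The plan is to realise the asserted isomorphism as a splitting of the universal central extension map $\delta_2 \colon \stmn \t \stmn \to \stmn$ supplied by the previous theorem, and then to force invertibility using perfectness. By that theorem $\delta_2$ is the universal central extension of $\stmn$: in particular it is a surjective central extension, its kernel $\Ker \delta_2 = \HL_2(\stmn)$ lies in $\Z(\stmn \t \stmn)$, and $\stmn \t \stmn$ is perfect. It therefore suffices to build a homomorphism of Leibniz superalgebras $\psi \colon \stmn \to \stmn \t \stmn$ with $\delta_2 \circ \psi = \mathrm{id}_{\stmn}$. On generators I would set
\[
\psi(\fil(c)) = \fis(c) \t \fsl(e),
\]
where $e$ is the bar-unit, the right-hand side is read modulo $\Ima \delta_3$, and $s$ is any index with $s \neq i, l$ (which exists since $m+n \geq 3$). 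With this choice $\delta_2 \circ \psi = \mathrm{id}$ is immediate on generators, because $\delta_2(\fis(c) \t \fsl(e)) = [\fis(c), \fsl(e)] = \fil(c \iz e) = \fil(c)$ by the first Steinberg relation and the bar-unit axiom.

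The entire content of the argument is the proof that $\psi$ is a well-defined homomorphism, and everything is governed by the single congruence extracted from $\delta_3 \equiv 0$ in the tensor square, namely
\[
[x, y] \t z \equiv x \t [y, z] + \menosuno{y}{z} [x, z] \t y .
\]
First I would prove the auxiliary fact that the class $\fis(c) \t \fsl(e)$ is independent of the admissible third index $s$: writing one such generator as a bracket of generators on a second auxiliary index and applying the congruence above, the correction terms are brackets of Steinberg generators whose indices do not match and so vanish by the third defining relation, leaving the two choices congruent. Granting this, $R$-linearity of $\psi$ is clear, and it remains to verify the three bracket relations. The common tool is the model identity
\[
[\psi(\fij(a)), \psi(\fkl(b))] = \fij(a) \t \fkl(b),
\]
obtained by expanding both factors as $\bigl(\text{generator in }c\bigr) \t \bigl(\text{generator in }e\bigr)$ and collapsing each tensor slot via the first relation and the bar-unit axiom. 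For each of the three cases one then rewrites $\fij(a) \t \fkl(b)$ by repeated use of the fundamental congruence, discarding the mismatched brackets that vanish by the third relation, until it coincides with $\psi$ applied to the right-hand side of the corresponding Steinberg relation (with $\fil(a \iz b)$ in the first case and $-(-1)^{\grad{\fij(a)}\grad{\fki(b)}}\fkj(b \de a)$ in the second).

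Once $\psi$ is known to be a homomorphism with $\delta_2 \circ \psi = \mathrm{id}$, the conclusion is formal. The relation $\delta_2 \circ \psi = \mathrm{id}$ makes $\psi$ injective. Moreover every $u \in \stmn \t \stmn$ decomposes as $u = \psi(\delta_2(u)) + (u - \psi(\delta_2(u)))$ with the second summand in the central kernel $\Ker \delta_2$, so $\stmn \t \stmn = \psi(\stmn) + \Ker \delta_2$. Since $\Ker \delta_2$ is central and $\stmn$ is perfect,
\[
\stmn \t \stmn = [\stmn \t \stmn, \stmn \t \stmn] = [\psi(\stmn), \psi(\stmn)] = \psi([\stmn, \stmn]) = \psi(\stmn),
\]
using perfectness of $\stmn \t \stmn$. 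Hence $\psi$ is also surjective, so it is an isomorphism (inverse to $\delta_2$).

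I expect the main obstacle to be precisely the relation-preservation step of the middle paragraph, and most acutely the second Steinberg relation: there the presence of $\de$, the output sign $-(-1)^{\grad{\fij(a)}\grad{\fki(b)}}$, and the signs produced by each application of the fundamental congruence must be tracked carefully through the $\zz$-grading and shown to combine correctly. Each such manipulation also requires an auxiliary index distinct from all indices occurring in the relation, and reconciling the index used in a computation with the one fixed in the definition of $\psi$ forces several pairwise distinct indices to be available simultaneously; this index room is exactly what the hypothesis $m+n \geq 5$ guarantees, and is the reason the argument in this form does not reach down to $m+n \geq 3$.
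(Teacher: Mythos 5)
Your proposal is correct and follows essentially the same route as the paper: both construct the section $\psi(\fij(a)) = \fik(a) \t \fkj(1)$, check independence of the auxiliary index and the three Steinberg relations via the congruence $\Ima \delta_3 \equiv 0$, and pair it with the evaluation map $\fij(a) \t \fkl(b) \mapsto [\fij(a), \fkl(b)]$. The only (harmless) divergence is at the end, where you deduce surjectivity of $\psi$ from centrality of $\Ker \delta_2$ and perfectness, whereas the paper observes directly that the same relation computations show $\psi \circ \varphi = \mathrm{id}$.
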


\begin{proof}
Let be the homomorphisms defined on generators:
\begin{align*}
{} &\varphi \colon \stmn \t \stmn \to \stmn, \quad \fij(a) \t \fkl(b) \mapsto [\fij(a), \fkl(b)], \\
{} &\psi \colon \stmn \to \stmn \t \stmn, \quad \fij(a) \mapsto \fik(a) \t \fkj(1).
\end{align*}
It is straightforward that $\varphi$ is a well defined Leibniz superalgebra homomorphism.
For different $i, j, k$ we have
\[
\fik(a) \t \fkj(1) = [\fis(a), \fsk(1)] \t \fkj(1) = \fis(a) \t \fsj(1),
\]
so $\psi$ does not depend of the choice of $k$. To check if $\phi$ preserves the relations it is enough to see that:

(a) If $i \neq l$ and $j = k$,
\[
\fij(a) \t \fkl(b) = \fij(a) \t [\fks(b), \fsl(1)] = \fis(a \iz b) \t \fsl(1).
\]

(b) If $i = l$ and $j \neq k$,
\begin{align*}
\fij(a) \t \fkl(b) &= [\fis(a), \fsj(1)] \t \fki(1) \\
{}& = -(-1)^{(\ii + \jj + \aa)(\kk + \ss + \bb)}\fks (b\de a) \t \fsj(1).
\end{align*}

(c) If $i \neq l$ and $j \neq k$,
\[
\fij(a) \t \fkl(b) = [\fis(a), \fsj(1)] \t \fkl(b) = 0.
\]

Moreover, these relations show that $\psi \circ \phi$ is the identity map and it is obvious that $\phi \circ \psi$ is the identity map too.
\end{proof}

\section{Universal central extension of $\slmn$ when $m+n < 5$}

In this section we will find the universal central extension of $\slmn$ when $3 \leq m+n <5$. We need some preliminary results first. Recall that $[D, D]$ is the subalgebra generated by the elements $a \iz b - \menosuno{a}{b}b \de a$. It happens that in superdialgebras, this is not necessarily an ideal.

\begin{Le}
Let $D$ be a unital superdialgebra. We have that $D \iz [D, D] \subseteq [D, D] \iz D$, $[D, D] \de D \subseteq D \de [D, D]$ and $[D, D] \iz D = D \de [D, D]$. Then the ideal generated by the elements  $a \iz b - \menosuno{a}{b}b \de a$ is just $[D, D] \iz D$.
\end{Le}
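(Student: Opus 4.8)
The plan is to reduce everything to two ``Leibniz-type'' identities, one expressing $a\iz[b,c]$ and one expressing $[x,y]\iz z$ through brackets, and then to exploit the symmetry between $\iz$ and $\de$. For the latter I would use the \emph{opposite superdialgebra} $D^{\mathrm{op}}$: the same graded module equipped with $x\iz' y:=y\de x$ and $x\de' y:=y\iz x$. A routine verification of the five axioms shows that $D^{\mathrm{op}}$ is again a unital superdialgebra with the same bar-unit $1$, and since $[x,y]_{D^{\mathrm{op}}}=-\menosuno{x}{y}[x,y]$ its commutator module is again $[D,D]$. Under this duality $D\iz[D,D]$ and $[D,D]\de D$ are interchanged, and so are $[D,D]\iz D$ and $D\de[D,D]$; hence it is enough to prove the first assertion and one inclusion of the third, the rest following formally.

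A preliminary observation streamlines all computations: using associativity of the two products together with the three mixed axioms, every triple product of homogeneous elements reduces to one of the normal forms $x\iz y\iz z$, $(x\de y)\iz z$, or $x\de y\de z$. With this in hand I would first prove
\[
a\iz[b,c]=[a\iz b,c]-\menosuno{b}{c}\,[a,c]\iz b .
\]
Both sides reduce to $a\iz b\iz c-\menosuno{b}{c}\,a\iz c\iz b$ once the two $(c\de a)\iz b$ contributions on the right are seen to cancel. Because $1$ is a bar-unit we may write the isolated bracket as $[a\iz b,c]=[a\iz b,c]\iz 1$, so both terms on the right lie in $[D,D]\iz D$; as $[D,D]$ is spanned by the elements $[b,c]$, this gives $D\iz[D,D]\subseteq[D,D]\iz D$. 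Applying the same identity in $D^{\mathrm{op}}$ yields $[D,D]\de D\subseteq D\de[D,D]$.

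The core difficulty is the equality $[D,D]\iz D=D\de[D,D]$: a pure $\iz$-product and a pure $\de$-product of three elements are genuinely different normal forms, so the two modules cannot be compared term by term, and the bar-unit must intervene to turn a lone bracket into an element of the target. The device is the identity
\[
[x,y]\iz z=[x,y\iz z]-\menosuno{x}{y}\,y\de[x,z],
\]
which I would check by passing to normal forms, the coefficient $\menosuno{x}{y}$ being forced by the requirement that the pure $\de$-terms cancel. Now $[x,y\iz z]=1\de[x,y\iz z]\in D\de[D,D]$ and $y\de[x,z]\in D\de[D,D]$, so $[D,D]\iz D\subseteq D\de[D,D]$; the reverse inclusion is its image under $D^{\mathrm{op}}$, establishing the third assertion.

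For the final statement I would prove that $J:=[D,D]\iz D$ is an ideal; since it contains every generator $[a,b]=[a,b]\iz 1$ and is contained in the ideal $I$ generated by the commutators (because $[D,D]\subseteq I$ and $I\iz D\subseteq I$), this forces $I=J$. Stability of $J$ under right $\iz$ is associativity; under left $\iz$ one rewrites $d\iz([b,c]\iz a)=(d\iz[b,c])\iz a$ and uses the first assertion; under right $\de$ one uses $(x\iz y)\de z=(x\de y)\de z$ together with associativity of $\de$ to land in $[D,D]\de D\subseteq D\de[D,D]=J$; and under left $\de$ one uses $(d\de u)\iz a=d\de(u\iz a)$ with $D\de[D,D]=J$. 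The only genuinely delicate point, and the one I would watch most carefully, is matching coefficients and super-signs in the two displayed identities; once these are in place, the duality and the normal-form reduction make the rest routine.
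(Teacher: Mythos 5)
Your proof is correct and follows essentially the same strategy as the paper: both rest on explicit identities that rewrite $a \iz [b,c]$ and $[a,b] \iz c$ using the dialgebra axioms and the bar-unit (your two displayed identities are equivalent variants of the paper's, and your passage to $D^{\mathrm{op}}$ simply replaces the paper's explicitly written dual identity for $[a,b]\de c$). Your closing verification that $[D,D]\iz D$ is genuinely an ideal containing the generators is a welcome addition, since the paper leaves that final step implicit.
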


\begin{proof}
The results follow, respectively, from the identities
\begin{align*}
a \iz [b, c] &= [a, b] \iz c - \menosuno{b}{c}[a \iz c, b] \iz 1, \\
[a, b] \de c &= -\menosuno{b}{c}a \de [c, b] + \menosuno{b}{c} 1 \de [a \de c, b], \\
[a, b] \iz c &= -\menosuno{a}{b} b \de [a, c] + [a, b \de c].
\end{align*}
\end{proof}

\begin{De}\label{D:quo}
Let $D$ be a superdialgebra and $m$ a positive integer. Let $\II_m$ be the ideal of $D$ generated by the elements $ma$ and $a \iz b - \menosuno{a}{b}b \de a$. We denote the quotient
\[
D_m = \dfrac{D}{\II_m}.
\]
\end{De}

We claim that $\stmn \otimes \stmn \cong \stmn \oplus \WW(m, n, D)$ where $\WW(m, n, D)$ is an $R$-supermodule which depends on $m$ and $n$ and the Leibniz superalgebra structure is given by an $R$-supermodule homomorphism $\varphi \colon \stmn \otimes \stmn \to \stmn \oplus \WW(m, n, D)$ in the conditions of Proposition \ref{P:constr}. Then we will define an inverse.

\subsection{Case of $\slcc$}

Let $\WW(4, 0, D)$ be the direct sum of six copies of $D_2$. The elements will be represented by $v_{ijkl}(a)$ where  $1 \leq i, j, k, l \leq 4$ are distinct, $a \in D$ and $\grad{v_{ijkl}(a)} = \aa$.
They will be related by $R$-linearity, the equivalence relations of $D_2$ and by $v_{ijkl}(a) = -v_{ilkj}(a) = -v_{kjil}(a) = v_{klij}(a)$.

\begin{Th}\label{T:slcc}
The universal central extension of $\slcc$ is $\stcc \oplus D^6_2$.
\end{Th}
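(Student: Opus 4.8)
The plan is to reduce the statement to the computation of the non-abelian tensor square of $\stcc$ and then to identify that square explicitly. Since $\stcc$ is perfect and the canonical map $\phi \colon \stcc \to \slcc$ is a central extension, Proposition~\ref{P:central} shows that the universal central extension of $\slcc$ is the composite of $\phi$ with the universal central extension of $\stcc$. By the theorem on non-abelian tensor products, the latter is $\delta_2 \colon \stcc \t \stcc \to \stcc$, with $\Ker \delta_2 = \HL_2(\stcc)$. Hence it is enough to produce an $R$-supermodule isomorphism $\stcc \t \stcc \cong \stcc \oplus D_2^6$ that carries $\delta_2$ to the projection onto the first summand; this identifies the total space of the universal central extension with $\stcc \oplus D_2^6$ and its kernel with $D_2^6 = \WW(4, 0, D)$.

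Following the conditions of Proposition~\ref{P:constr}, I would first define an $R$-supermodule homomorphism $\varphi \colon \stcc \t \stcc \to \stcc \oplus \WW(4, 0, D)$ on the generators $\fij(a) \t \fkl(b)$. Its first component is forced to be $\delta_2(\fij(a) \t \fkl(b)) = [\fij(a), \fkl(b)]$, so that $\delta_2 = \pi_1 \circ \varphi$. Its $\WW$-component must vanish whenever this bracket is nonzero, i.e.\ it is supported on the tensors with $\{i, j\} \cap \{k, l\} = \emptyset$; for $m + n = 4$ this forces $\{i, j, k, l\} = \{1, 2, 3, 4\}$, and I would set the component to be $\pm v_{ijkl}(a \iz b)$ (for a suitable product and a sign dictated by the super-convention). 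The crucial point is to check that $\varphi$ vanishes on $\Ima \delta_3$, so that it descends to $\stcc \t \stcc = (\stcc \t_R \stcc)/\Ima \delta_3$. The first component is automatic since $\delta_2 \delta_3 = 0$, and the $\WW$-component is exactly what the defining relations of $\WW(4, 0, D)$ — $R$-linearity, the relations of $D_2$, and the symmetries $v_{ijkl} = -v_{ilkj} = -v_{kjil} = v_{klij}$ — are designed to absorb; concretely I would expand $\delta_3\bigl(\fij(a) \t \fkl(b) \t F_{pq}(c)\bigr)$ for each admissible index pattern and read off the relation it imposes.

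Next I would write down the candidate inverse $\psi \colon \stcc \oplus \WW(4, 0, D) \to \stcc \t \stcc$, using on the $\stcc$ summand the formula from Theorem~\ref{T:ucemn}, $\fij(a) \mapsto \fik(a) \t \fkj(1)$ (independent of $k$ by the argument given there), and on $\WW(4, 0, D)$ the assignment $v_{ijkl}(a) \mapsto \fij(a) \t \fkl(1)$. After verifying that $\psi$ respects $R$-linearity, the relations of $D_2$ and the index symmetries, I would check that $\varphi$ and $\psi$ are mutually inverse $R$-supermodule isomorphisms. Since $\varphi$ lies over $\delta_2$ and maps $\WW(4, 0, D)$ into $\Ker \delta_2 = \HL_2(\stcc)$, the universal central extension is realised on the supermodule $\stcc \oplus D_2^6$. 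I would stress that this is a splitting of $R$-supermodules only and \emph{not} of Leibniz superalgebras: if $\WW(4, 0, D)$ were a central Leibniz direct summand, perfectness of $\stcc \t \stcc$ would force it to vanish.

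The main obstacle is precisely the well-definedness of the $\WW$-component of $\varphi$, i.e.\ the bookkeeping showing that $\Ima \delta_3$ imposes on the symbols $v_{ijkl}$ exactly the relations of $D_2$ together with the four-fold index symmetry, and nothing more. This is where the phenomenon special to $m + n = 4$ surfaces: with only four indices at hand there is no spare index $s$ to rewrite a disjoint-pair tensor $\fij(a) \t \fkl(b)$ as a bracket and hence as zero, as is done for $m + n \geq 5$ in case (c) of Theorem~\ref{T:ucemn}, so these tensors persist in the kernel. I expect the factor $2$ — the passage from $D$ to $D_2$, which is also the point at which our result corrects \cite{Liu} — to emerge from combining the index symmetry $v_{ijkl} = v_{klij}$ with the bracket relation $a \iz b \equiv \menosuno{a}{b} b \de a$, producing an identity of the form $2\, v_{ijkl}(a) = 0$. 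Tracking the super signs through these identifications is the delicate part of the computation.
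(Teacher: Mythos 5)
Your proposal follows essentially the same route as the paper: the same reduction via Proposition \ref{P:central} and the non-abelian tensor square, the same maps $\varphi$ and $\psi$, and the same identification of the relations of $D_2$ and the index symmetries as consequences of $\Ima \delta_3$ (in the paper both $\fij(2a) \t \fkl(1) = 0$ and $\fij(a \iz b + \menosuno{a}{b} b \de a) \t \fkl(1) = 0$ drop out of the single identity $0 = [\fij(a) \t \fji(b), \fij(c) \t \fkl(1)]$ by specializing $b, c$ to the bar-unit). Your observation that the splitting is one of $R$-supermodules rather than of Leibniz superalgebras matches the paper's use of Proposition \ref{P:constr}, which transports the bracket $[\fij(a), \fkl(b)] = v_{ijkl}(a \iz b)$ for distinct indices.
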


\begin{proof}
Let $\varphi \colon \stcc \otimes \stcc \to \stcc \oplus D^6_2$ be the homomorphism defined on generators by $\fij(a) \t \fkl(b) \mapsto v_{ijkl}(a \iz b)$, if $i, j, k, l$ are distinct and $\fij(a) \t \fkl(b) \mapsto [\fij(a), \fkl(b)]$, otherwise.
It is obvious that it conserves the grading and that the kernel is inside the centre, so we have to check if $\varphi$ sends the relation of the non-abelian tensor product to zero.

The relation on generators is given by
\begin{equation}
\begin{multlined}\tag*{(Gen)}
\fij(a) \t [\fkl(b), F_{st}(c)] - [\fij(a), \fkl(b)] \t F_{st}(c) +{} \\
\menosuno{\fkl(b)}{F_{st}(c)} [\fij(a), F_{st}(c)] \t \fkl(b).
\end{multlined}
\end{equation}

%
If is not involved any preimage of $\WW(4, 0, D)$, then the image is just the Leibniz identity on $\stcc$. To have any $v_{ijkl}(a)$ we need that in $i, j, k, l, s, t$ one element appears three times and the others three once.
Using the relation $v_{ijkl}(a \iz [b, c]) = 0 = v_{ijkl}([a, b] \iz c)$ and that we do not need to worry about signs ($v_{ijkl}(2a) = 0$) it is easy to go through the different possibilities and check that they all vanish.
Therefore, the bracket defined on $\stmn \oplus D^6_2$ is the standard bracket unless if $i, j, k, l$ are distinct, then $[\fij(a), \fkl(b)] = v_{ijkl}(a \iz b)$.
Moreover, the elements $v_{ijkl}(a)$ are in the centre.

Now we define $\psi \colon \stcc \oplus D^6_2 \to \stcc \otimes \stcc$ by $\fij(a) \mapsto \fik(a) \t \fkj(1)$ and $v_{ijkl}(a) \mapsto \fij(a) \t \fkl(1)$.
It is well defined for the elements of $\stcc$ (as in Theorem \ref{T:ucemn}) so we have to check if it is well defined for the elements of $D^6_2$.
\begin{align*}
\fij(a) \t \fkl(1) &= [\fil(a), \flj(1)] \t \fkl(1) = -\fil(a) \t \fkj(1), \\
\fij(a) \t \fkl(1) &= \fij(a) \t [\fki(1), \fil(1)] = -\fkj(a) \t \fil(1).
\end{align*}
So $v_{ijkl}(a) = -v_{ilkj}(a) = -v_{kjil}(a) = v_{klij}(a)$. Now,
\begin{align*}
0 &= [\fij(a) \t \fji(b), \fij(c) \t \fkl(1)] = [\fij(a), \fji(b)] \t [\fij(c), \fkl(1)] \\
{} &= \big[[\fij(a), \fji(b)], \fij(c) \big] \t \fkl(1) = \big[[\fij(a), \fji(b)], [\fik(c), \fkj(1)] \big] \t \fkl(1) \\
{} &= \fij(a \iz b \iz c + (-1)^{\aa \bb + \aa \cc + \bb \cc} c \de b \de a) \t \fkl(1),
\end{align*}
Choosing $b = c = 1$ we have $\fij(2a) \t \fkl(1) = 0$. Choosing $c = 1$, we have
\[
\fij(a \iz b + \menosuno{a}{b}b \de a) \t \fkl(1) = 0.
\]

Therefore, $\psi$ is a well-defined $R$-supermodule homomorphism. Moreover, the identity
\begin{align*}
\fij(a) \t \fkl(b) &= \fij(a) \t [\fki(b), \fil(1)] = -\menosuno{a}{b}\fkj(b \de a) \t \fil(1) \\
{} &= \fkj(a \iz b) \t \fil(1),
\end{align*}
shows that $\psi$ is a Leibniz superalgebra homomorphism and that $\varphi$ and $\psi$ are inverses to each other.
\end{proof}

\subsection{Case of $\sltu$}

Let $\WW(3, 1, D)$ be the direct sum of six copies of $\Pi(D_2)$, where $\Pi$ denotes the parity change functor. The elements will be represented by $v_{ijkl}(a)$ with the same relations as in the previous case.

\begin{Th}
The universal central extension of $\sltu$ is $\sttu \oplus \Pi(D_2)^6$.
\end{Th}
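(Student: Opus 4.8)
The plan is to follow the same template as Theorem~\ref{T:slcc}, the only structural novelty being the presence of the odd index. Since $m+n = 4 \geq 3$, the superalgebra $\sttu$ is perfect, so by the theorem identifying $\delta_2 \colon L \t L \to L$ with the universal central extension of a perfect Leibniz superalgebra, it suffices to produce an isomorphism $\sttu \t \sttu \cong \sttu \oplus \Pi(D_2)^6$. Granting this, $\delta_2$ becomes the projection onto the first summand, $\Pi(D_2)^6 \cong \HL_2(\sttu)$, and composing with the central extension $\phi \colon \sttu \to \sltu$ and invoking Proposition~\ref{P:central} yields that $\sttu \oplus \Pi(D_2)^6 \to \sltu$ is universal.

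First I would define, exactly as before, the $R$-supermodule map
\[
\varphi \colon \sttu \t \sttu \to \sttu \oplus \Pi(D_2)^6, \qquad \fij(a) \t \fkl(b) \mapsto v_{ijkl}(a \iz b)
\]
when $i, j, k, l$ are distinct, and $\fij(a) \t \fkl(b) \mapsto [\fij(a), \fkl(b)]$ otherwise. The role of the parity change functor becomes transparent at the grading check: four distinct indices in $\{1, 2, 3, 4\}$ necessarily include the odd index $4$, so $\ii + \jj + \kk + \ll = \uno$ and hence $\grad{\fij(a) \t \fkl(b)} = \uno + \aa + \bb$, which matches $\grad{v_{ijkl}(a \iz b)} = \uno + (\aa + \bb)$ precisely because $\Pi$ shifts the grading of $D_2$ by $\uno$. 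The well-definedness of $\varphi$ then amounts to checking that the defining relation (Gen) of the non-abelian tensor product is sent to zero, which I would organise by the same case analysis as in Theorem~\ref{T:slcc}: whenever no preimage of $\WW(3, 1, D)$ occurs the image is the Leibniz identity in $\sttu$, while a term $v_{ijkl}(\cdot)$ can only arise when one of the six indices is repeated, in which case the relations $v_{ijkl}(a \iz [b, c]) = 0 = v_{ijkl}([a, b] \iz c)$ inherited from $\II_2$ annihilate it.

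Next I would define the candidate inverse $\psi \colon \sttu \oplus \Pi(D_2)^6 \to \sttu \t \sttu$ by $\fij(a) \mapsto \fik(a) \t \fkj(1)$ and $v_{ijkl}(a) \mapsto \fij(a) \t \fkl(1)$, and verify that it respects the defining relations of $\Pi(D_2)^6$. The antisymmetries $v_{ijkl}(a) = -v_{ilkj}(a) = -v_{kjil}(a) = v_{klij}(a)$ should follow from rewritings of the form $\fij(a) \t \fkl(1) = [\fil(a), \flj(1)] \t \fkl(1)$, and the relation $2a \equiv 0$ together with $a \iz b - \menosuno{a}{b} b \de a \equiv 0$ defining $D_2$ should be recovered from the analogue of the identity
\[
0 = [\fij(a) \t \fji(b), \fij(c) \t \fkl(1)] = \fij\bigl(a \iz b \iz c + (-1)^{\aa\bb + \aa\cc + \bb\cc} c \de b \de a\bigr) \t \fkl(1)
\]
of Theorem~\ref{T:slcc}, specialised at $b = c = 1$ and at $c = 1$. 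That $\varphi$ and $\psi$ are mutually inverse Leibniz homomorphisms would then be read off from the identity $\fij(a) \t \fkl(b) = \fkj(a \iz b) \t \fil(1)$ and from $\psi \circ \varphi = \mathrm{id}$ as in the even case.

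The genuinely new difficulty, and the step I expect to absorb most of the work, is sign bookkeeping. In the purely even case of Theorem~\ref{T:slcc} the authors could discard all signs because $v_{ijkl}(2a) = 0$ kills the ambiguity; here the odd index $4$ contributes nontrivial Koszul factors $(-1)^{(\ii + \jj + \aa)(\kk + \ss + \bb)}$ in every bracket collapse, so I would have to confirm that these are consistent with the imposed four-term symmetry of $v_{ijkl}$ and obstruct neither the vanishing of (Gen) nor the well-definedness of $\psi$. Since $\Pi(D_2)$ has the same underlying module $D_2$, the relation $2a \equiv 0$ still holds and continues to collapse many sign ambiguities; the remaining care is in matching the signs of the two successive bracket collapses against the antisymmetries of $v_{ijkl}$. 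Once this is settled, the isomorphism $\sttu \t \sttu \cong \sttu \oplus \Pi(D_2)^6$, and hence the theorem, follow.
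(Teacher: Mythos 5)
Your overall strategy is the paper's: reduce to an isomorphism $\sttu \t \sttu \cong \sttu \oplus \Pi(D_2)^6$, treat it as a perturbation of the $\stcc$ case, and let $\Pi$ absorb the odd total degree $\ii+\jj+\kk+\ll=\uno$ of four distinct indices. The gap sits in the one step that is genuinely new here: verifying that $\psi$ respects the relation $2a\equiv 0$ of $D_2$, i.e.\ that $\fij(2a)\t\fkl(1)=0$ holds in $\sttu\t\sttu$. You propose to extract this from
\[
0=[\fij(a)\t\fji(b),\fij(c)\t\fkl(1)]=\fij\bigl(a\iz b\iz c+(-1)^{\aa\bb+\aa\cc+\bb\cc}\,c\de b\de a\bigr)\t\fkl(1)
\]
specialised at $b=c=1$, but the displayed sign is the purely even one. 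The correct exponent is $(\ii+\jj+\aa)(\ii+\jj+\bb)+(\aa+\bb)(\ii+\jj+\cc)$, which acquires an extra $1$ whenever $\ii+\jj=\uno$, that is, whenever the odd index $4$ lies in $\{i,j\}$. In that case $b=c=1$ yields $\fij(a-a)\t\fkl(1)=0$, which is vacuous, so the components of $\WW(3,1,D)$ whose first index pair contains $4$ receive no $2$-torsion relation from your identity. This is not harmless sign bookkeeping: it is exactly the mechanism that forces the two $D_0$ summands in the $\sldd$ case, so it must be confronted rather than deferred.

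The paper resolves it with a different commutator: when $\ii=\uno$ it uses $0=[\fkl(a)\t\flk(1),\fij(1)\t\fkl(1)]$, whose expansion via the defining relation of the non-abelian tensor product gives $\fkl(2a)\t\fij(1)=0$ with $k,l$ both even, and this is then transported through the already-established symmetry $v_{klij}(a)=v_{ijkl}(a)$. (In $\WW(3,1,D)$ one of the two index pairs is always purely even, which is why the transport succeeds here and fails for $v_{1324}$ and $v_{3142}$ in $\WW(2,2,D)$.) Your closing remark about matching signs against the antisymmetries of $v_{ijkl}$ gestures toward this fix, but as written the proposal asserts a derivation that degenerates in precisely the cases that distinguish this theorem from Theorem~\ref{T:slcc}; you need either the paper's alternative commutator or an explicit reduction, via $v_{ijkl}=v_{klij}$, to the configuration where the odd index avoids the first pair.
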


\begin{proof}
Assuming that $\ii = 0$, then we can adapt the proof of the Theorem \ref{T:slcc}. In the case that $\ii = 1$,
\[
0 = [\fkl(a) \t \flk(1), \fij(1) \t \fkl(1)],
\]
gives us that $\fkl(2a) \t \fij(1) = 0$, and we adapt again the proof of Theorem \ref{T:slcc}.
\end{proof}

\subsection{Case of $\sldd$}

Let $\WW(2, 2, D)$ be the direct sum of four copies of $D_2$ and two copies of $D_0$. The elements will be represented by $v_{ijkl}(a)$ related by $v_{ijkl}(a) = -v_{ilkj}(a) = -v_{kjil}(a) = v_{klij}(a)$, where $v_{1324}(a)$ and $v_{3142}(a)$ will represent the copies of $D_0$ and the rest will be the copies of $D_2$. Note that $v_{ijkl}(a)$ represents one copy of $D_0$ if and only if $\ii + \jj = \uno = \kk + \ll$ and $\ii + \kk = \cero = \jj + \ll$.

\begin{Th}
The universal central extension of $\sldd$ is $\stdd \oplus D_2^4 \oplus D_0^2$.
\end{Th}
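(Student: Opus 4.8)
The plan is to reproduce, in the present setting, the pair of mutually inverse homomorphisms built in the proof of Theorem~\ref{T:slcc}. Concretely, I will construct $\varphi\colon\stdd\otimes\stdd\to\stdd\oplus\WW(2,2,D)$ together with a map $\psi$ in the opposite direction, show they are inverse Leibniz superalgebra isomorphisms, and then conclude: since $\delta_2\colon\stdd\otimes\stdd\to\stdd$ is the universal central extension of the perfect superalgebra $\stdd$ and the canonical map $\stdd\to\sldd$ is a central extension, Proposition~\ref{P:central} identifies the universal central extension of $\sldd$ with $\stdd\otimes\stdd\cong\stdd\oplus D_2^4\oplus D_0^2$. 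The genuinely new feature, absent in Theorems~\ref{T:ucemn} and~\ref{T:slcc}, is that two of the six copies in $\WW(2,2,D)$ are $D_0$ rather than $D_2$; there the reduction $v_{ijkl}(2a)=0$ is unavailable, so the superalgebra signs coming from the two odd indices $3,4$ must be tracked honestly.

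First I would set $\varphi(\fij(a)\t\fkl(b))=v_{ijkl}(a\iz b)$ when $i,j,k,l$ are pairwise distinct and $\varphi(\fij(a)\t\fkl(b))=[\fij(a),\fkl(b)]$ otherwise, exactly as in Theorem~\ref{T:slcc}, the Leibniz superalgebra structure on the target being that induced by $\varphi$ through Proposition~\ref{P:constr}. The map preserves the grading because $\grad{\fij(a)\t\fkl(b)}=(\ii+\jj+\aa)+(\kk+\ll+\bb)$ and the four index parities sum to $\cero$, so this equals $\aa+\bb=\grad{v_{ijkl}(a\iz b)}$; moreover the $\WW$-part lands in the centre. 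The crux of this step is that the defining relation (Gen) is sent to $0$. When no preimage of $\WW(2,2,D)$ occurs this is just the Leibniz identity in $\stdd$; otherwise exactly one index among $i,j,k,l,s,t$ is repeated three times. For the four $D_2$-copies the argument of Theorem~\ref{T:slcc} carries over verbatim, because modulo $2$ the sign is irrelevant and $v_{ijkl}(a\iz[b,c])=0=v_{ijkl}([a,b]\iz c)$ still hold. For the two $D_0$-copies, represented by $v_{1324}$ and $v_{3142}$ and characterised by $\ii=\kk$ and $\jj=\ll$, I must instead cancel the graded signs explicitly, using the symmetry relations $v_{ijkl}(a)=-v_{ilkj}(a)=-v_{kjil}(a)=v_{klij}(a)$ together with the vanishing $v_{ijkl}(a\iz[b,c])=0=v_{ijkl}([a,b]\iz c)$; the presence of two odd indices is exactly what makes the surviving terms pair up and cancel.

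Next I would define $\psi\colon\stdd\oplus\WW(2,2,D)\to\stdd\otimes\stdd$ by $\fij(a)\mapsto\fik(a)\t\fkj(1)$ and $v_{ijkl}(a)\mapsto\fij(a)\t\fkl(1)$. Well-definedness on the $\stdd$-summand and the four symmetry relations follow as in Theorem~\ref{T:slcc}. The decisive computation is
\[
0=[\fij(a)\t\fji(b),\,\fij(c)\t\fkl(1)]=\fij\bigl(a\iz b\iz c+\sigma_{ijkl}\,(-1)^{\aa\bb+\aa\cc+\bb\cc}\,c\de b\de a\bigr)\t\fkl(1),
\]
in which, relative to Theorem~\ref{T:slcc}, an extra index-grading sign $\sigma_{ijkl}=\pm1$ appears. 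Specialising $c=1$ gives $\fij(a\iz b+\sigma_{ijkl}(-1)^{\aa\bb}b\de a)\t\fkl(1)=0$. For the four $D_2$-patterns one checks $\sigma_{ijkl}=+1$, so further setting $b=1$ produces $\fij(2a)\t\fkl(1)=0$ and the anti-commutator relation, i.e. precisely the relations defining $D_2$. For the two $D_0$-patterns $v_{1324},v_{3142}$ one finds $\sigma_{ijkl}=-1$: now the $b=1$ specialisation degenerates to $0=0$, so no multiple-of-two relation is forced, while the remaining relation is exactly the graded commutator $a\iz b-(-1)^{\aa\bb}b\de a$, which are the defining relations of $D_0$. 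Hence $\psi$ is a well-defined $R$-supermodule homomorphism with image pattern $D_2^4\oplus D_0^2$.

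The main obstacle is precisely this sign analysis: verifying that $\sigma_{ijkl}=+1$ on the four $D_2$-patterns and $\sigma_{ijkl}=-1$ on the two $D_0$-patterns $v_{1324},v_{3142}$, and re-running the relation (Gen) for those two copies without recourse to $v_{ijkl}(2a)=0$. Once $\psi$ is shown to be well defined, the identity $\fij(a)\t\fkl(b)=\fij(a)\t[\fki(b),\fil(1)]=\fkj(a\iz b)\t\fil(1)$ from the proof of Theorem~\ref{T:slcc} shows that $\psi$ is a Leibniz superalgebra homomorphism and that $\varphi$ and $\psi$ are mutually inverse. Composing the resulting isomorphism $\stdd\otimes\stdd\cong\stdd\oplus D_2^4\oplus D_0^2$ with the canonical central extension $\stdd\to\sldd$ and invoking Proposition~\ref{P:central} then yields the claim.
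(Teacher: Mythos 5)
Your overall strategy coincides with the paper's (construct $\varphi$ and $\psi$, show they are mutually inverse, conclude via Proposition~\ref{P:central}), and you correctly isolate the essential new phenomenon: for the two copies $v_{1324}, v_{3142}$ the index parities flip the sign in $[\fij(a)\t\fji(b),\fij(c)\t\fkl(1)]=0$, so that $c=1$ yields the graded commutator $a\iz b-\menosuno{a}{b}b\de a$ rather than the anticommutator, and $b=c=1$ degenerates to $0=0$ so that no relation $2a=0$ is forced. That is indeed why those two summands are $D_0$.

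There is, however, a genuine gap in your construction of the maps: you cannot take $\varphi$ and $\psi$ ``exactly as in Theorem~\ref{T:slcc}''. The paper's $\varphi$ carries the extra factor $(-1)^{\grad{b}}\sigma(ijkl)$ (and $\psi$ the factor $\sigma(ijkl)$), and this twist is not a cosmetic normalisation. For the pattern $(i,j,k,l)=(1,3,2,4)$ one has in $\stdd\otimes\stdd$
\[
\fij(a)\t\fkl(b)=\fij(a)\t[\fki(b),\fil(1)]=-(-1)^{\bb+\aa\bb}\fkj(b\de a)\t\fil(1)=(-1)^{\bb}\,\fij(a\iz b)\t\fkl(1),
\]
a consequence of $\Ima\delta_3$ together with the relations $\fij(a\iz b-\menosuno{a}{b}b\de a)\t\fkl(1)=0$ and $\fkj(c)\t\fil(1)=-\fij(c)\t\fkl(1)$ that you yourself establish. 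Your untwisted $\varphi$ sends the two ends of this chain to $v_{1324}(a\iz b)$ and $(-1)^{\bb}v_{1324}(a\iz b)$, which differ by $2v_{1324}(a\iz b)$; since $2$ is not killed in $D_0$, your $\varphi$ fails to annihilate the corresponding instance of (Gen) (concretely $\delta_3\bigl(F_{13}(a)\t F_{21}(b)\t F_{14}(1)\bigr)$ with $\grad{b}=\uno$) and is therefore not well defined on the $D_0$ summands. The same omission makes your closing identity $\fij(a)\t\fkl(b)=\fkj(a\iz b)\t\fil(1)$ wrong by the factor $-(-1)^{\bb}$, so your $\varphi$ and $\psi$ are not mutually inverse there. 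On the four $D_2$ summands the discrepancy is $2v_{ijkl}(\cdot)=0$ and hence invisible, which is exactly why it does not surface in Theorem~\ref{T:slcc}. To repair the argument you must build $(-1)^{\grad{b}}$ into $\varphi$, normalise both maps by $\sigma(ijkl)$, and track the index--parity signs in the relations $v_{ijkl}=-v_{ilkj}=-v_{kjil}=v_{klij}$ rather than importing them verbatim from the all-even case.
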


\begin{proof}
Let $S_4$ be the group of permutations of $4$ elements and let $\sigma \colon S_4 \to \{-1, 1\}$ be the map that sends $(1423), (2314), (3241), (4132)$ to $-1$ and the rest to $1$. Note that if $\sigma(ijkl) = -1$, then $v_{ijkl}(a)$ represents a copy of $D_0$.
Let be the homomorphism
\[
\varphi \colon \stdd \otimes \stdd \to \stdd \oplus D^4_2 \oplus D^2_0,
\]
defined on generators by
\[
\fij(a) \t \fkl(b) \mapsto
\begin{cases*}
(-1)^b \sigma(ijkl) v_{ijkl}(a \iz b) \quad &\text{if }\textit{ i, j, k, l} \text{ are distinct,} \\
[\fij(a), \fkl(b)] \quad &\text{otherwise.}
\end{cases*}
\]

Again we need to check that $\varphi$ sends the relation of the non-abelian tensor product to zero. If $v_{ijkl}(a)$ represents an element of $D_2$, then the proof is similar as the proof given in Theorem \ref{T:slcc}.
Therefore, we have to check if relation (Gen) vanishes when an element of $D_0$ appears. Avoiding symmetries the choices that we have to check are $(i,j,k,l,s,t) = (1,3,2,1,1,4)$, $(1,3,2,3,3,4)$, $(1,3,1,4,2,1)$ and $(1,3,3,4,2,3)$. It is a straightforward computation and we omit it.

Now we define $\psi \colon \stdd \oplus D^4_2 \oplus D_0^2 \to \stdd \otimes \stdd$ by $\fij(a) \mapsto \fik(a) \t \fkj(1)$ and $v_{ijkl}(a) \mapsto \sigma(ijkl)\fij(a) \t \fkl(1)$. To check that is a well-defined homomorphism we can follow the proofs of Theorem \ref{T:ucemn} and Theorem \ref{T:slcc} and we will cover all the cases unless the two copies of $D_0$. We have that
\begin{align*}
\fij(a) \t \fkl(1) &= [\fil(a), \flj(1)] \t \fkl(1) = -\fil(a) \t \fkj(1), \\
\fij(a) \t \fkl(1) &= \fij(a) \t [\fki(1), \fil(1)] = -\fkj(a) \t \fil(1).
\end{align*}
So $v_{ijkl}(a) = -v_{ilkj}(a) = -v_{kjil}(a) = v_{klij}(a)$.
Then,
\begin{align*}
0 &= [\fij(a) \t \fji(b), \fij(c) \t \fkl(1)] = [\fij(a), \fji(b)] \t [\fij(c), \fkl(1)] \\
{} &= \fij(a \iz b \iz c + (-1)^{(\aa + \uno)(\bb + \uno) + (\cc + \uno)(\aa + \bb)} c \de b \de a) \t \fkl(1),
\end{align*}
choosing $c = 1$ we have that
\[
\fij(a \iz b - \menosuno{a}{b} b \de a) \t \fkl(1) = 0.
\]

To see that $\psi$ is a Leibniz superalgebra homomorphism,
\begin{align*}
\fij(a) \t \fkl(b) &= \fij(a) \t [\fki(b), \fil(1)] = [\fij(a), \fki(b)] \t \fil(1) \\
{} &= - (-1)^{\aa \bb + \bb}\fkj(b \de a) \t \fil(1) = - (-1)^{\bb} \fkj(a \iz b) \t \fil(1) \\
{} &= (-1)^{\bb}\fij(a \iz b) \t \fkl(1).
\end{align*}
The previous relation also proves that $\psi \circ \varphi$ is the identity map. Moreover, it is straightforward that $\varphi \circ \psi$ is the identity map, completing the proof.
\end{proof}

\subsection{Case of $\sltc$}

Let $\WW(3, 0, D)$ be the direct sum of six copies of $D_3$. The elements will be represented by $v_{ijpq}(a)$ where $pq = ik$ or $kj$ and $\{i, j, k\} = \{1, 2, 3\}$
 and they will be related by $R$-linearity, the equivalence relations of $D_3$ and the additional relation $v_{ijpq}(a) = -v_{pqij}(a)$.

\begin{Th}\label{T:sltc}
The universal central extension of $\sltc$ is $\sttc \oplus D_3^6$.
\end{Th}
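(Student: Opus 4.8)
The plan is to mirror the proof of Theorem~\ref{T:slcc}: I will exhibit mutually inverse $R$-supermodule homomorphisms between $\sttc \t \sttc$ and $\sttc \oplus D_3^6$, the first of which realises a Leibniz bracket as in Proposition~\ref{P:constr}. Once this is done, the universal central extension $\delta_2\colon \sttc\t\sttc\to\sttc$ of the non-abelian tensor product, composed with $\phi\colon\sttc\to\sltc$, transports to a central extension $\sttc\oplus D_3^6\to\sltc$, and Proposition~\ref{P:central} identifies it as universal. Concretely I set, on generators,
\[
\varphi\bigl(\fij(a)\t\fkl(b)\bigr)=
\begin{cases}
v_{ijkl}(a\iz b) & \text{if } i=k,\ j\neq l,\text{ or } j=l,\ i\neq k,\\
[\fij(a),\fkl(b)] & \text{otherwise.}
\end{cases}
\]
Note that with only three indices the condition $i=k,\ j\neq l$ (resp.\ $j=l,\ i\neq k$) already forces $\{i,j,l\}=\{1,2,3\}$ (resp.\ $\{i,j,k\}=\{1,2,3\}$), so $v_{ijkl}$ is exactly a generator $v_{ijpq}$ with $pq=ik$ or $kj$, and $\psi$ will send $\fij(a)\mapsto\fik(a)\t\fkj(1)$ and $v_{ijpq}(a)\mapsto\fij(a)\t F_{pq}(1)$.

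First I would record that every tensor with $j\neq k$, $i\neq l$ and $\{i,j\}=\{k,l\}$, such as $F_{12}(a)\t F_{12}(b)$, already vanishes in $\sttc\t\sttc$: writing the second factor as a bracket and applying the relation coming from $\Ima\delta_3$ collapses it to a sum of vanishing brackets. Granting this, checking that $\varphi$ kills the defining relation (Gen) is, as in Theorem~\ref{T:slcc}, a finite case analysis over the patterns of $(i,j,k,l,s,t)$ in which a value is repeated so that a $v$-symbol can appear; since the target already carries the relations of $D_3$ (namely $3a=0$ and $v_{ijkl}(a\iz[b,c])=v_{ijkl}([a,b]\iz c)=0$) and all matrix indices are even, each pattern reduces to zero. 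This makes $\varphi$ a grading-preserving map with central image on the $D_3^6$-part.

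The heart of the argument, and the step I expect to be the main obstacle, is the well-definedness of $\psi$ on $D_3^6$, in particular the relation $v_{ijpq}(3a)=0$ that forces the modulus $3$. Unlike the $4\times 4$ situation, where the diagonal element $[\fij(a),\fji(b)]$ brackets trivially against the spectator unit $\fkl(1)$ because $k,l\notin\{i,j\}$, here the third index is unavoidably a row or column of the new symbol, so those auxiliary brackets survive and contribute Cartan integers. Writing $x=F_{12}(a)\t F_{13}(1)$, $y=F_{13}(1)\t F_{12}(a)$, $H_{12}=[F_{12}(1),F_{21}(1)]$ and $H_{13}=[F_{13}(1),F_{31}(1)]$, I would expand
\[
0=[F_{12}(1)\t F_{21}(1),\,F_{12}(a)\t F_{13}(1)]
\quad\text{and}\quad
0=[F_{13}(1)\t F_{31}(1),\,F_{13}(1)\t F_{12}(a)]
\]
(both zero because in each case the second tensor-factor bracket vanishes) via the $\Ima\delta_3$ relation. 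Using $[H_{12},F_{12}(a)]=2F_{12}(a)$, $[H_{12},F_{13}(1)]=F_{13}(1)$ and the symmetric identities $[H_{13},F_{13}(1)]=2F_{13}(1)$, $[H_{13},F_{12}(a)]=F_{12}(a)$ for $H_{13}$, these collapse to $2x-y=0$ and $2y-x=0$. The coefficient matrix is the Cartan matrix of $\mathfrak{sl}(3)$, of determinant $3$; eliminating $y$ gives $x=4x$, that is $3x=0$, which is precisely $v_{1213}(3a)=0$, and the remaining symbols follow by symmetry.

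Finally, expanding $[F_{12}(a)\t F_{21}(b),\,F_{12}(1)\t F_{13}(1)]$ in the same way and then applying $v_{1213}=-v_{1312}$ yields $v_{1213}(2\,a\iz b+\menosuno{a}{b}b\de a)=0$; combined with $v_{1213}(3a)=0$ this gives the commutator relation $v_{ijpq}(a\iz b-\menosuno{a}{b}b\de a)=0$, while the antisymmetry $v_{ijpq}(a)=-v_{pqij}(a)$ is obtained exactly as in Theorem~\ref{T:slcc}. The well-definedness of $\psi$ on the $\sttc$-summand proceeds as in Theorems~\ref{T:ucemn} and~\ref{T:slcc}, via the module-generation identities that let one move the argument $b$ onto the first tensor factor; the same identities show at once that $\psi$ is a Leibniz superalgebra homomorphism and a two-sided inverse of $\varphi$, completing the proof.
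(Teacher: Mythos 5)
Your overall strategy coincides with the paper's: you build mutually inverse maps $\varphi,\psi$ between $\sttc\t\sttc$ and $\sttc\oplus D_3^6$ (your case condition for $\varphi$ is exactly the paper's ``$pq=ik$ or $kj$'' condition) and then invoke Propositions~\ref{P:central} and~\ref{P:constr}. Where you genuinely diverge is in extracting the $3$-torsion. The paper obtains $\fij(3a)\t\fik(1)=0$ from the single identity $0=[\fij(a),\fik(1)]\t[\fik(1),\fki(1)]$, whose expansion against $H_{ik}$ produces the coefficient $-2-1=-3$ in one stroke. You instead derive the pair of relations $2x-y=0$ and $2y-x=0$ from $H_{12}\t[F_{12}(a),F_{13}(1)]=0$ and $H_{13}\t[F_{13}(1),F_{12}(a)]=0$ and eliminate; I checked the Cartan brackets $[H_{12},F_{12}(a)]=2F_{12}(a)$, $[H_{12},F_{13}(1)]=F_{13}(1)$ and their $H_{13}$-analogues, and the argument is correct. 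It is marginally longer than the paper's but has the conceptual merit of exhibiting the modulus as the determinant of the $A_2$ Cartan matrix. Your derivation of the commutator relation from $[F_{12}(a)\t F_{21}(b),F_{12}(1)\t F_{13}(1)]$ combined with $v_{1213}=-v_{1312}$ and $3x=0$ likewise agrees with the paper's $c=1$ specialisation of its three-variable identity.

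The one genuine gap is your closing sentence: the well-definedness of $\psi$ on the $\sttc$-summand does \emph{not} ``proceed as in Theorem~\ref{T:ucemn}''. That argument manufactures a fourth auxiliary index $s\notin\{i,j,k\}$, which does not exist when $m+n=3$; the paper explicitly flags this and supplies a replacement. What must be proved is the identity $\fik(a\iz b)\t\fkj(1)=\fik(a)\t\fkj(b)$ (and its mirror for $\fji$), which the paper gets by rewriting the left side as $-[\fik(a),\fkj(b)]\t[\fkj(1),\fjk(1)]$ and expanding against $H_{kj}$, where the Cartan integers $-2$ and $1$ combine to coefficient $1$; one also needs $\fij(a)\t\fij(b)=0$, which you do record. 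The repair uses exactly the $H$-bracketing technique you already deploy for the $3$-torsion, so the gap is local and fixable, but as written that step is asserted by appeal to an argument that fails in rank $2$.
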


\begin{proof}
Let be the homomorphism
\[
\varphi \colon \sttc \otimes \sttc \to \sttc \oplus D^6_3,
\]
defined on generators by
\[
\fij(a) \t F_{pq}(b) \mapsto
\begin{cases*}
v_{ijpq}(a \iz b) \quad &\text{if } \textit{pq} = \textit{ik} \text{ or } \textit{kj} \\
[\fij(a), F_{pq}(b)] \quad &\text{otherwise.}
\end{cases*}
\]

To check that $\varphi$ is well defined it only needs to check that relation (Gen) is followed when a $v_{ijpq}(a)$ appears. It is immediate that
\[
\varphi(x \t [y, z]) = -\menosuno{y}{z}\varphi(x \t [z, y]),
\]
so the non straightforward cases are

\begin{align*}
\varphi(\fji(a) \t [\fji(b), \fik(c)]) &= v_{jijk}(a \iz b \iz c)\\
{} &= v_{jijk}\big(a \iz (b \de c)\big) = -\menosuno{b}{c}v_{jkji}(a \iz c \iz b) \\
{} &= -\menosuno{b}{c}\varphi\big( \fjk(a \iz c) \t \fji(b) \big) \\
{} &= \varphi\big([\fji(a), \fji(b)] \t \fik(c) \\
{} &\quad-\menosuno{b}{c}[\fji(a), \fik(c)] \t \fji(b) \big),
\end{align*}

and

\begin{align*}
\varphi(\fij(a) \t [\fki(b), \fij(c)]) &= v_{ijkj}(a \iz b \iz c) \\
{} &= -\menosuno{a}{b} v_{kjij}\big((b \de a) \iz c\big) \\
{} &= -\menosuno{a}{b} \varphi\big( \fkj(b \de a) \t \fij(c) \big) \\
{} &= \varphi\big( [\fij(a), \fki(b)] \t \fij(c) \\
{} &\quad- \menosuno{b}{c} [\fij(a), \fij(c)] \t \fki(b) \big).
\end{align*}

Now we define $\psi \colon \sttc \oplus D^6_3 \to \sttc \otimes \sttc$ by $\fij(a) \mapsto \fik(a) \t \fkj(1)$ and $v_{ijpq}(a) \mapsto \fij(a) \t F_{pq}(1)$. There is only one choice for $k$, but we need to check that it is well defined for elements of $\sttc$, since the arguments of Theorem \ref{T:ucemn} do not hold.

Then,
\begin{align*}
\fik(a \iz b) \t \fkj(1) &= -\fij(a \iz b) \t [\fkj(1), \fjk(1)] \\
{} &= -[\fik(a), \fkj(b)] \t [\fkj(1), \fjk(1)] \\
{} &= \fik(a) \t \big(\fkj(b) + \fkj(b)\big) - \fik(a) \t \fkj(b) \\
{} &= \fik(a) \t \fkj(b),
\end{align*}
and similarly for $\fik(a \iz b) \t \fji(1) = \fik(a) \t \fji(b)$. Moreover,
\[
\fij(a) \t \fij(b) = \fij(a) \t [\fik(b), \fkj(1)] = 0.
\]

For the elements of $D_3^6$,
\[
0 = [\fij(a), \fik(1)] \t [\fik(1), \fki(1)] = \fij(a) \t \fik(-3) = \fij(3a) \t \fik(1),
\]
and
\begin{align*}
0 &= [\fij(a), \fji(b)] \t [\fij(c), \fik(1)] \\
{} &= \fij(a \iz b \iz c + (-1)^{\aa\bb + \aa\cc + \bb\cc}c \de b \de a) \t \fik(1) - \fik(a \iz b) \t \fij(c),
\end{align*}
choosing $b = c = 1$,
\[
\fik(a) \t \fij(1) = -\fij(a) \t \fik(1),
\]
and choosing $c = 1$,
\[
\fij(-a \iz b + \menosuno{a}{b} b \de a) \t \fik(1) = 0.
\]

To complete the proof,
\begin{align*}
\fij(a) \t \fik(b) &= - \fij(a) \t [\fjk(b), \fij(1)] = -\fik(a \iz b) \t \fij(1) \\
{} &= \fij(a \iz b) \t \fik(1).
\end{align*}
\end{proof}

\subsection{Case of $\sldu$} In this case, $\WW(2, 1, D) = 0$.

\begin{Th}
The universal central extension of $\sldu$ is $\stdu$.
\end{Th}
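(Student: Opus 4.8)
The plan is to follow the blueprint of the previous subsections: since $\WW(2,1,D)=0$, it suffices to produce an isomorphism $\stdu\t\stdu\cong\stdu$. The map realizing it is nothing but $\delta_2$, so exhibiting a two-sided inverse makes $\delta_2\colon\stdu\t\stdu\to\stdu$ an isomorphism, and Proposition \ref{P:central}, applied to the central extension $\phi\colon\stdu\to\sldu$, then shows that $\stdu$ is the universal central extension of $\sldu$. I would set up the same two maps as before,
\begin{align*}
\varphi&\colon \stdu\t\stdu\to\stdu, & \fij(a)\t\fkl(b)&\mapsto[\fij(a),\fkl(b)],\\
\psi&\colon \stdu\to\stdu\t\stdu, & \fij(a)&\mapsto\fik(a)\t\fkj(1),
\end{align*}
the only structural difference being that $k$ is now the \emph{unique} index of $\{1,2,3\}\setminus\{i,j\}$. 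Since there is no freedom in the choice of $k$, the argument of Theorem \ref{T:ucemn} is unavailable, and I would instead verify that $\psi$ respects the Steinberg relations by the explicit identities already used in the proof of Theorem \ref{T:sltc}.

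The real content is to show that every \emph{degenerate} tensor $w=\fij(a)\t\fkl(b)$ for which $[\fij(a),\fkl(b)]=0$ (that is, $i\neq l$ and $j\neq k$) already vanishes in $\stdu\t\stdu$; these are precisely the tensors that, in the all-even algebra $\sltc$, survived to form the summand $\WW(3,0,D)=D_3^6$. With only three indices available, the degenerate tensors occur in the two shapes $\fij(a)\t\fik(b)$ and $\fij(a)\t\fkj(b)$. For each I would rewrite one factor as a bracket and impose the tensor relation $\delta_3(x\t y\t z)=0$ to obtain, on one hand, a \emph{symmetry relation} $w=\varepsilon\,w$ with $\varepsilon=\pm1$ read off from the parities of the indices that pass through the rewriting, and, on the other hand, an \emph{integral relation} $m\,w=0$ coming from the Cartan-type bracket $0=[\fij(a)\t\fji(b),\fij(c)\t\fkl(1)]$ exploited in Theorem \ref{T:sltc}. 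In $\sltc$ all indices are even, so $\varepsilon=+1$ and $m=3$, leaving only $3w=0$ and hence $\WW=D_3^6$; but in $\sldu$ the single odd index $\grad{3}=\uno$ enters these rewritings and flips the relevant sign to $\varepsilon=-1$, adjoining the relation $2w=0$. Since $2$ and $3$ are coprime, $w=3w-2w=0$.

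With the degenerate tensors eliminated, $\varphi$ is well defined with target $\stdu$ (no $\WW$-summand is required), $\psi$ lands in the correct place, and the identities above show $\varphi$ and $\psi$ to be mutually inverse, giving $\stdu\t\stdu\cong\stdu$ as desired. The main obstacle is the sign-and-coefficient bookkeeping of the previous paragraph: one must check, placement by placement of the odd index $3$ among $i,j,k$, that the symmetry relation and the Cartan relation really do contribute multiples of $w$ that are coprime, so that their combination forces $w=0$. Conceptually this is the heart of the matter — it is exactly the presence of a single odd index that manufactures the extra factor $2$ absent from the $\sltc$ computation and thereby collapses $\WW(2,1,D)$ to zero.
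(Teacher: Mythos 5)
Your proposal is correct and follows the paper's strategy: since $\WW(2,1,D)=0$, one shows $\stdu\t\stdu\cong\stdu$ by proving that the degenerate tensors $\fij(a)\t\fik(b)$ and $\fij(a)\t\fkj(b)$ (the would-be generators of a $\WW$-summand) vanish, reusing the relations of Theorem \ref{T:sltc} and then invoking Proposition \ref{P:central}. The only divergence is in how the vanishing is extracted. The paper reruns the single Cartan-type identity $0=[\fij(a),\fik(b)]\t[\fik(1),\fki(1)]=\fij(a)\t\fik\bigl(2+(-1)^{\ik}\bigr)$: the coefficient is already $1$ (not $3$) whenever the odd index lies in $\{i,k\}$, and the remaining placement (odd index equal to $j$) is handled by first applying the antisymmetry $\fij(a)\t\fik(1)=-\fik(a)\t\fij(1)$. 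You instead keep the relation $3w=0$ and adjoin $2w=0$ from a symmetry relation, concluding by coprimality. This works, but note that neither of your two relations holds uniformly over all placements of the odd index: for $w=\fij(a)\t\fik(b)$ the double rewriting gives $w=(-1)^{\jj+\kk}w$, which is vacuous when the odd index is $i$, while the Cartan relation gives $\bigl(2+(-1)^{\ik}\bigr)w=0$, which equals $3w=0$ only when the odd index is $j$. In every placement at least one of the two degenerates to $w=0$ outright, or else both $2w=0$ and $3w=0$ are available, so your placement-by-placement check does close; just be sure to carry it out rather than assert the pair $(2,3)$ appears in all cases.
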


\begin{proof}
Defining the homomorphisms as in Theorem \ref{T:sltc}, we can recover the relations and additionally
\begin{align*}
0 &= [\fij(a), \fik(b)] \t [\fik(1), \fki(1)] = \fij(a) \t \fik(2 + (-1)^{\ik}1).
\end{align*}
Therefore, if $\ii = \uno$ or $\kk = \uno$, we have the relation $\fij(a) \t \fik(1) = 0$. If $\jj = \uno$, we do the same calculation but for $\fik(a) \t \fij(1)$. It is similar for $\fij(a) \t \fkj(1)$.
\end{proof}

\section{Hochschild homology and Leibniz homology}

In this section we adapt to the superalgebra case the definition of Hochschild homology of dialgebras introduced in \cite{Fra} and we relate it with the universal central extension of $\slmn$.

Let $D$ a superdialgebra with a $R$-basis containing the bar-unit. Note that now we have to assume that $D$ admits an $R$-basis. The boundary map $d_n \colon D^{\t n+1} \to D^{\t n}$ is defined on generators by
\begin{align*}
d_n(a_0 \t \cdots \t a_n) &= \sum_{i=0}^{n-1} (a_0 \t \cdots \t a_i \iz a_{i+1} \t \cdots \t a_n) \\
{} &+ (-1)^{n + \grad{a_n} \sum_{i=0}^{n-1}\grad{a_i}} (a_n \de a_0 \t a_1 \t \cdots \t a_{n-1}),
\end{align*}
where $a_i \in D$. The \emph{Hochschild homology of superdialgebras}, denoted by $\HH_*(D)$, is the homology of the chain complex formed by the boundary maps $d_*$. Let $I$ be the ideal of $D$ generated by the elements of the form $a \t b \iz c - a \t b \de c$. We define
\[
\HHS_1(D) = \dfrac{\Ker d_1}{\Im d_2 + I}.
\]

\begin{Th}
There is an isomorphism of $R$-supermodules $\HL_2\big(\slmn\big) \cong \HHS_1(D) \oplus \WW(m, n, D)$.
\end{Th}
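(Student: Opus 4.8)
The plan is to use the universal central extension to reduce the statement to an identification of the kernel of the Steinberg extension, and then to establish that identification by a generalized trace.

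\emph{Reduction.} By Theorem \ref{T:ucemn} together with the theorems of Section 4, for every $m+n\geq 3$ the universal central extension of $\slmn$ is the composite $\stmn\otimes\stmn\xrightarrow{\delta_2}\stmn\xrightarrow{\phi}\slmn$ (universal by Proposition \ref{P:central}), where $\stmn\otimes\stmn\cong\stmn\oplus\WW(m,n,D)$ and, under this isomorphism, $\delta_2$ is the projection onto the first summand, so that $\Ker\delta_2=\WW(m,n,D)$. Since the kernel of the universal central extension of a perfect Leibniz superalgebra equals its second homology, $\HL_2(\slmn)=\Ker(\phi\circ\delta_2)$; and because $\delta_2$ is this projection we obtain an $R$-supermodule splitting
\[
\HL_2(\slmn)\cong\Ker\phi\oplus\WW(m,n,D).
\]
It therefore remains to produce a grading-preserving isomorphism $\Ker\phi\cong\HHS_1(D)$.

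\emph{The trace map.} Since $\phi\colon\fij(a)\mapsto\eij(a)$ restricts to an isomorphism on the off-diagonal part of $\stmn$, the kernel $\Ker\phi$ is contained in the diagonal subspace spanned by the elements $\thij(a,b):=[\fij(a),\fji(b)]$, whose image is $\phi(\thij(a,b))=E_{ii}(a\iz b)-\menosuno{\eij(a)}{\eji(b)}E_{jj}(b\de a)$. On this diagonal subspace I define a trace by $\thij(a,b)\mapsto[a\otimes b]\in D\otimes_R D/(\Im d_2+I)$ and extend $R$-linearly. For $z\in\Ker\phi$ the equation $\phi(z)=0$, read off on each diagonal entry and summed, forces the representative into $\Ker d_1$, so the trace restricts to a map $\Phi\colon\Ker\phi\to\HHS_1(D)$; it preserves the $\zz$-grading because $\thij(a,b)$ has degree $\aa+\bb$.

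\emph{Relations and inverse.} Well-definedness of the trace requires that the relations among the diagonal generators holding in $\stmn$ are carried into $\Im d_2+I$. Expanding $[[\fij(a),\fjk(b)],\fki(c)]$ by the Leibniz identity for distinct $i,j,k$ and collecting the diagonal terms gives the generating relation of $\Im d_2$,
\[
(a\iz b)\otimes c+a\otimes(b\iz c)+(-1)^{\cc(\aa+\bb)}(c\de a)\otimes b=0,
\]
while realising $\fji(b\iz c)$ and its $\de$-counterpart as brackets and applying the dialgebra axioms gives $a\otimes(b\iz c)\equiv a\otimes(b\de c)$, the generator of $I$; the one-sided bar-unit relations $a\iz 1=a=1\de a$ settle the normalisations. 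For the inverse I send a class $[a\otimes b]$, with $a\otimes b\in\Ker d_1$, to $\thij(a,b)$ for a fixed pair $i\neq j$, corrected by bar-unit diagonal terms so that the $\phi$-image vanishes; $R$-linearity together with the matched relations then shows this $\Psi$ is well defined and two-sided inverse to $\Phi$.

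\emph{Main obstacle.} The technical heart is this matching of relations. The delicate points are the super Koszul signs carried by every bracket combined with the two distinct products $\iz$ and $\de$, and the fact that the bar-unit is only one-sided, so that $1\iz c\neq c$ and $c\de 1\neq c$ in general; reconciling the signs produced by the Leibniz expansion with those appearing in the definition of $d_2$, and checking that the Steinberg relations hit $\Im d_2+I$ exactly (neither a proper sub- nor a supermodule), is where the real work lies. One must also verify that $\Phi$ and $\Psi$ are independent of the auxiliary index used to factor products, which is precisely the place where the hypothesis $m+n\geq 3$ enters.
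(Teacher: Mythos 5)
Your reduction step is correct and is a tidier organization than the paper's: using the Section 4 isomorphisms $\stmn\otimes\stmn\cong\stmn\oplus\WW(m,n,D)$, under which $\delta_2$ becomes the projection onto the first summand, one does obtain $\HL_2\big(\slmn\big)\cong\Ker\phi\oplus\WW(m,n,D)$ as $R$-supermodules, so the theorem reduces to $\Ker\phi\cong\HHS_1(D)$. (The paper does not separate these two steps; it compares the whole sequence $0\to\HL_2(\slmn)\to\stmn\otimes\stmn\to\slmn\to 0$ with $0\to\HHS_1(D)\oplus\WW(m,n,D)\to D\otimes_R D/(\Im d_2+I)\oplus\WW(m,n,D)\to[D,D]\to 0$ in a single diagram, via the maps $\mu$ and $\Str_2$.)

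The gap is in the identification $\Ker\phi\cong\HHS_1(D)$, which is where all the content lies. Your trace $\thij(a,b)\mapsto[a\otimes b]$ carries no sign depending on $i,j$, but the relations among the diagonal elements do. Expanding as you propose, the Leibniz identity gives
\[
\thik(a\iz b,c)=\thij(a,b\iz c)-(-1)^{(\ii+\kk+\cc)(\ii+\kk+\aa+\bb)}\,\widetilde{H}_{kj}(c\de a,b),
\]
whose image under your sign-free trace is $(a\iz b)\otimes c-a\otimes(b\iz c)+(-1)^{(\ii+\kk+\cc)(\ii+\kk+\aa+\bb)}(c\de a)\otimes b$. This is not the displayed ``generating relation of $\Im d_2$'' with all plus signs that you assert, and the sign on the last term depends on $\ii+\kk$ whenever $\aa+\bb+\cc=\cero$; a single index-free formula cannot send both sign variants into $\Im d_2+I$, since their difference is $2(c\de a)\otimes b$, which is not a boundary in general. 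So the trace must carry index- and parity-dependent corrections, in the spirit of the signs $(-1)^{\ii(\ii+\grad{x_{ii}})}$ appearing in $\Str_1$, and you have not supplied them. Moreover the bijectivity of $\Phi$ and $\Psi$ is asserted rather than proved: the paper's actual mechanism is to define the section $\mu(a\otimes b)=F_{1j}(a)\otimes F_{j1}(b)-\menosuno{a}{b}F_{1j}(b\de a)\otimes F_{j1}(1)$, check that $\Str_2$ retracts it, and then prove surjectivity onto the kernel by reducing every kernel element to an element of $\Ima\mu$ plus a residue $\sum_{i\geq 2}F_{1i}(a_i)\otimes F_{i1}(1)$ that the kernel condition forces to vanish. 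Your sketch has no counterpart of this normal-form argument, and ``corrected by bar-unit diagonal terms'' does not substitute for it. As written, the central computation of your plan fails with the map you defined.
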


\begin{proof}
We have the following diagram
\[
\xymatrix@C=12pt{
0 \ar[r] & \HHS_1(D) \oplus \WW(m, n, D) \ar[r] & \dfrac{D \t D}{\Im d_2 + I} \oplus \WW(m, n, D) \ar@/^/[d]^-\mu \ar[r]^-{d_1} & [D, D] \ar@/^/[d]^-{E_{11}(-)} \ar[r] & 0 \\
0 \ar[r] & \HL_2\big(\slmn\big) \ar[r] & \stmn \t \stmn \ar[u]^-{\Str_2} \ar[r]^-{\omega} & \slmn \ar[u]^-{\Str_1} \ar[r] & 0,
}
\]
where $\mu(a \t b) = F_{1j}(a) \t F_{j1}(b) - \menosuno{a}{b} F_{1j}(b \de a) \t F_{j1}(1)$, $\mu\big(v_{ijkl}(a)\big) = \fij(a) \t \fkl(b)$ and
\[
\Str_2\big(\fij(a) \t \fkl(b)\big)
\begin{cases*}
a \t b & \text{if} $i = j$ \text{ and } $k = l$ \\
v_{ijkl}(a \iz b) & \text{if} $i, j, k, l$ \text{ if it makes sense depending of }$m, n$ \\
0 & \text{otherwise}.
\end{cases*}
\]
It is a straightforward computation that $\mu \circ \Str_2$ and $E_{11}(-) \circ \Str_1$ are the identity maps and that the diagram is commutative.
Then the restrictions of $\Str_2$ to the kernel of $\omega$ is also a split epimorphism, with $\mu$ restricted to the kernel of $d_1$ as section. Let us see that these restrictions are indeed isomorphisms. An element in the kernel of $\omega$, is a sum of elements of the form $\fij(a) \t \fji(b)$ plus the elements of $\WW(m, n, D)$.
Any element of $\Ker \omega$ can be written as an element of $\Ima \mu$ plus $\sum_{i=2}^{m+n}F_{1i}(a_i) \t F_{i1}(1)$, since
\[
\fij(a) \t \fji(b) = F_{i1}(a) \t F_{1i}(b) - (-1)^{(\aa + \ii + \jj)(\bb + \ii + \jj)} F_{j1}(b \de a) \t F_{1j}(1),
\]
and
\begin{multline*}
F_{1j}(a) \t F_{j1}(b) = F_{1j}(a) \t F_{j1}(b) - \menosuno{a}{b} F_{j1}(b \de a) \t F_{1j}(1) + {} \\
\menosuno{a}{b} F_{j1}(b \de a) \t F_{1j}(1).
\end{multline*}

Furthermore, if it is in the kernel of $\omega$, all the $a_i$ must be zero. Then the restriction of $\mu$ to the kernel of $d_1$ is surjective.
\end{proof}

\begin{Rem}
The proof given in \cite{LiHu2} can also be adapted since the assumptions on the characteristic of the ring are not used, but we rather give our version of the proof to show its relation with non-abelian tensor product.
\end{Rem}

\section{Concluding remarks}
Combining the results obtained above we present the following summarizing theorems

\begin{Th}
Let $R$ a unital commutative ring and $D$ an associative unital $R$-superdialgebra with an $R$-basis containing the identity. Then,
\[
\HL_2\big(\slmn\big)=
\begin{cases*}
\HHS_1(D) & $\text{for } m+n  \geq 5 \text{ or }m=2, n=1$, \\
\HHS_1(D) \oplus D_3^6 & $\text{for }m=3, n=0$, \\
\HHS_1(D) \oplus D_2^6 & $\text{for }m=4, n=0$, \\
\HHS_1(D) \oplus \Pi(D_2)^6 & $\text{for }m=3, n=1$, \\
\HHS_1(D) \oplus D_2^4 \oplus D_0^2 & $\text{for }m=2, n=2$,
\end{cases*}
\]
where $D_m$ is the quotient of $D$ by the ideal $mD + ([D, D] \iz D)$ (Definition \ref{D:quo}) and $\Pi$ is the parity change functor.
\end{Th}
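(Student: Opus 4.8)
The plan is to read the statement off directly from the structural isomorphism
\[
\HL_2\big(\slmn\big) \cong \HHS_1(D) \oplus \WW(m, n, D)
\]
established above, feeding into it the value of the $R$-supermodule $\WW(m, n, D)$ determined separately in each admissible case. Since that isomorphism is exactly what ties the $\WW$-term to $\HL_2$, the present theorem reduces to a bookkeeping step: for each pair $(m, n)$ I substitute the corresponding $\WW(m, n, D)$ and rewrite the resulting quotient module in the notation of Definition~\ref{D:quo}. Before invoking the isomorphism I would observe that the hypothesis ``$D$ admits an $R$-basis containing the identity'' is precisely the running assumption under which $\HHS_1(D)$ and the isomorphism are defined (the identity being the bar-unit), so the appeal is legitimate.

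I then go through the cases. For $m + n \geq 5$, Theorem~\ref{T:ucemn} yields $\stmn \t \stmn \cong \stmn$, so there is no extra summand and $\WW(m, n, D) = 0$; the same holds for $(m, n) = (2, 1)$, where the opening of the subsection on $\sldu$ records $\WW(2, 1, D) = 0$. Both ranges therefore give $\HL_2(\slmn) \cong \HHS_1(D)$, matching the first line of the display.

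For the four remaining small cases I substitute the modules obtained in the respective theorems: $\WW(3, 0, D) = D_3^6$ (Theorem~\ref{T:sltc}), $\WW(4, 0, D) = D_2^6$ (Theorem~\ref{T:slcc}), $\WW(3, 1, D) = \Pi(D_2)^6$, and $\WW(2, 2, D) = D_2^4 \oplus D_0^2$. In each of these $D_m = D / \big(mD + ([D, D] \iz D)\big)$ by Definition~\ref{D:quo}, which is exactly the convention announced in the statement, so assembling the six lines produces the displayed formula.

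Since all the genuine content already lives in the case-by-case analyses and in the structural isomorphism, there is no substantive obstacle in the argument itself. The only point demanding care is consistency of conventions: I must check that the $\WW(m, n, D)$ occurring in the isomorphism is literally the same $R$-supermodule defined and computed in each subsection---with its internal relations $v_{ijkl}(a) = -v_{ilkj}(a) = -v_{kjil}(a) = v_{klij}(a)$, the parity shift $\Pi$ in the $(3, 1)$ case, and the correct integer $m$ selecting $D_2$, $D_3$, or $D_0$---and that these summands sit inside $\HL_2$ as the central elements they were shown to be. Once this matching is confirmed, the theorem follows at once.
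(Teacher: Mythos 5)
Your proposal is correct and follows exactly the route the paper intends: the theorem is stated as a summary obtained by ``combining the results obtained above,'' namely the isomorphism $\HL_2\big(\slmn\big) \cong \HHS_1(D) \oplus \WW(m, n, D)$ together with the case-by-case identifications of $\WW(m, n, D)$ (zero for $m+n \geq 5$ by Theorem~\ref{T:ucemn} and for $(2,1)$, and the explicit modules $D_3^6$, $D_2^6$, $\Pi(D_2)^6$, $D_2^4 \oplus D_0^2$ in the remaining cases). Your attention to the consistency of the $\WW$-conventions and to the basis hypothesis is exactly the bookkeeping the paper leaves implicit.
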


\begin{Th}
Let $R$ a unital commutative ring and $D$ an associative unital $R$-superdialgebra with an $R$-basis containing the identity. Then,
\[
\Ho_2\big(\stmn\big)=
\begin{cases*}
0 & $\text{for } m+n  \geq 5 \text{ or }m=2, n=1$, \\
D_3^6 & $\text{for }m=3, n=0$, \\
D_2^6 & $\text{for }m=4, n=0$, \\
\Pi(D_2)^6 & $\text{for }m=3, n=1$, \\
D_2^4 \oplus D_0^2 & $\text{for }m=2, n=2$,
\end{cases*}
\]
where $D_m$ is the quotient of $D$ by the ideal $mD + ([D, D] \iz D)$ (Definition \ref{D:quo}) and $\Pi$ is the parity change functor.
\end{Th}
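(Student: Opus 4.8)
The plan is to obtain $\Ho_2(\stmn)$ as an immediate consequence of the tensor-square isomorphisms already established in Sections 3 and 4, using the identification of the second Leibniz homology of a perfect Leibniz superalgebra with the kernel of its universal central extension. The real content lives in those earlier theorems; the present statement will be essentially a corollary.

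First I would invoke that $\stmn$ is perfect (by its very definition and \cite{LiHu}), so the theorem characterising the universal central extension of a perfect Leibniz superalgebra applies: $\delta_2 \colon \stmn \t \stmn \to \stmn$ is the universal central extension, and its kernel is the second Leibniz homology, so that $\Ho_2(\stmn) = \HL_2(\stmn) = \Ker \delta_2$. The whole problem therefore reduces to locating $\Ker \delta_2$ inside the non-abelian tensor square.

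Next I would assemble the explicit isomorphisms $\varphi \colon \stmn \t \stmn \to \stmn \oplus \WW(m,n,D)$ produced case by case in Theorem \ref{T:ucemn} ($m+n \geq 5$), Theorem \ref{T:slcc} ($(m,n)=(4,0)$), the subsection on $\sltu$ ($(m,n)=(3,1)$), the subsection on $\sldd$ ($(m,n)=(2,2)$), Theorem \ref{T:sltc} ($(m,n)=(3,0)$), and the subsection on $\sldu$ ($(m,n)=(2,1)$, where $\WW=0$). The crucial remark I would make explicit is that the $\stmn$-component of each $\varphi$ is exactly $\delta_2$: on a generator $\fij(a) \t \fkl(b)$ the map $\varphi$ either returns the bracket $[\fij(a), \fkl(b)]$, in which case it coincides with $\delta_2$, or it returns a new generator $v_{ijkl}(a \iz b)$ of $\WW(m,n,D)$, and this second alternative occurs precisely for those index patterns for which the Steinberg relations force $[\fij(a), \fkl(b)] = 0$, so that $\delta_2$ annihilates the same generators. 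Consequently, transported along $\varphi$, the map $\delta_2$ becomes the canonical projection $\stmn \oplus \WW(m,n,D) \to \stmn$.

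From this the theorem is immediate: $\Ho_2(\stmn) = \Ker \delta_2 \cong \WW(m,n,D)$, and inserting the value of $\WW(m,n,D)$ recorded in each subsection --- namely $0$ for $m+n \geq 5$ and for $(m,n)=(2,1)$, $D_3^6$ for $(m,n)=(3,0)$, $D_2^6$ for $(m,n)=(4,0)$, $\Pi(D_2)^6$ for $(m,n)=(3,1)$, and $D_2^4 \oplus D_0^2$ for $(m,n)=(2,2)$ --- reproduces the stated table. As a consistency check, combining $\Ho_2(\stmn) \cong \WW(m,n,D)$ with the previous theorem $\HL_2(\slmn) \cong \HHS_1(D) \oplus \WW(m,n,D)$ recovers the short exact sequence $0 \to \HL_2(\stmn) \to \HL_2(\slmn) \to \Ker \phi \to 0$ attached to the composite $\phi \circ \delta_2$, forcing $\Ker \phi \cong \HHS_1(D)$. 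The only step demanding care --- and hence the main obstacle --- is the bookkeeping behind the crucial remark above, namely confirming in each subsection that the generators thrown into $\WW(m,n,D)$ are precisely those with vanishing Steinberg bracket; this is a matter of organisation rather than of computational substance once the isomorphisms of Sections 3 and 4 are in hand.
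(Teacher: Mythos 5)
Your proposal is correct and follows exactly the route the paper intends: the summarizing theorem is stated as a corollary of the case-by-case isomorphisms $\stmn \t \stmn \cong \stmn \oplus \WW(m,n,D)$, combined with the fact that for a perfect Leibniz superalgebra $\delta_2$ is the universal central extension with kernel $\HL_2$, and your key observation that the $\stmn$-component of each $\varphi$ agrees with $\delta_2$ (because the generators sent to $\WW(m,n,D)$ are precisely those killed by the Steinberg relations) is the point the paper leaves implicit. No gaps.
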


\begin{Rem}
We recall that in the case that $m = n = 2$, $\WW(2, 2, D)$ might not be zero even if char$(R) \neq 2$ contradicting \cite[Theorem 6.2]{Liu}.
\end{Rem}


\end{document}